\numberwithin{equation}{section}
\DeclareMathOperator{\Td}{Td}
\DeclareMathOperator{\ch}{ch}
\DeclareMathOperator{\cc}{c}
\DeclareMathOperator{\Tr}{Tr}
\DeclareMathOperator{\ric}{Ric}
\DeclareMathOperator{\End}{End}
\newtheorem*{maindefn*}{{\sc Definition}}
\newtheorem{mainthm}{Theorem}
\theoremstyle{definition}
\newtheorem{definition}{Definition}
\newtheorem{theorem}{Theorem}
\newtheorem{lemma}{Lemma}
\newtheorem{proposition}{Proposition}
\theoremstyle{remark}
\newtheorem{remark}{Remark}
\title[Quillen metric and moment map]{Asymptotic expansion of the variation of the Quillen metric and its moment map interpretation}
\author{Kiyoon Eum}
\address{Department of Mathematical Sciences, KAIST, 291 Daehak-ro, Yuseong-gu, Daejeon 34141, South Korea}
\email{kyeum@kaist.ac.kr}
\begin{document}

\begin{abstract}
In K\"ahler geometry, the Donaldson--Fujiki moment map picture interprets the scalar curvature of a K\"ahler metric as a moment map on the space of compatible almost complex structures on a fixed symplectic manifold. In this paper, we generalize this picture using the framework of equivariant determinant line bundles. Given a prequantization $P=(L,h,\nabla)$ of a compact symplectic manifold $(M,\omega)$, let $\mathcal{G}=\mathrm{Aut}(P)$. For each $k\in\mathbb{N}$, we construct a $\mathcal{G}$-equivariant determinant line bundle $\lambda^{(k)}\rightarrow\mathcal{J}_{int}$ on the space of integrable compatible almost complex structures, equipped with the $\mathcal{G}$-invariant Quillen metric. The curvature form of $\lambda^{(k)}$ admits an asymptotic expansion whose coefficients yield a sequence of $\mathcal{G}$-invariant closed $2$-forms $\Omega_j$ on $\mathcal{J}_{int}$ and corresponding moment maps $\mu_j:\mathcal{J}_{int}\rightarrow C^\infty(M)$. Each $\mu_j$ arises from the asymptotic expansion of the variation of the logarithm of the Quillen metric with respect to K\"ahler potentials, with the complex structure held fixed. This provides a natural generalization of the Donaldson--Fujiki moment map interpretation of scalar curvature. Moreover, we show that $\mu_j$ coincide with the $Z$--critical equations introduced by Dervan--Hallam, and we state a generalization of Fujiki's fiber integral formula.
\end{abstract}

\maketitle

\thispagestyle{empty}
\tableofcontents


\section{Introduction}

In K\"ahler geometry, the Donaldson--Fujiki moment map picture \cite{donaldson1997remarks,fujiki1992moduli} characterizes the scalar curvature of a K\"ahler metric as a moment map on the space of compatible almost complex structures on a fixed symplectic manifold. In light of the Kempf--Ness theorem, the Yau--Tian--Donaldson conjecture proposes a correspondence between the existence of a \textit{constant scalar curvature K\"ahler (cscK) metric} and an algebro-geometric notion of stability. Since its formulation, it has become an important topic in K\"ahler geometry. For recent developments, see \cite{dervan2025stability} and the references therein.

The original proof of the Donaldson–Fujiki moment map picture was obtained by direct computation, and it is difficult to generalize this approach to more complicated curvature quantities. However, it was already pointed out in \cite{fujiki1992moduli,donaldson1997remarks,donaldson2001planck} that the moment map structure should be related to an equivariant line bundle on the space of compatible almost complex structures, which is naturally expected to be the \textit{determinant line bundle} developed by Quillen \cite{quillen1985determinants} and Bismut--Gillet--Soul\'e \cite{bismut1988analytic3}. In this paper, we use the equivariant determinant line bundle to derive the moment map property for scalar curvature-type quantities associated with K\"ahler metrics, generalizing the Donaldson--Fujiki moment map picture for scalar curvature. Note that a similar approach was taken in \cite{foth2007manifold}; we will explain how our result relates to theirs.

Let us describe our results. Let $(M,J,\omega)$ be a compact K\"ahler manifold. Fix the symplectic manifold $(M,\omega)$ and let $\mathcal{J}_{int}$ be the space of \textit{integrable compatible almost complex structures} on $M$. The set $\mathcal{J}_{int}$ has a natural structure of a complex manifold (see Section \ref{spaceof} for details). Assume $[\omega]$ is integral and fix a \textit{prequantization} $P=(L,h,\nabla)$ of $(M,\omega)$. Then $\mathcal{G}:=\mathrm{Aut}(P)$ acts on $\mathcal{J}_{int}$ by biholomorphisms. We consider a \textit{universal family} over $\mathcal{J}_{int}$, whose fiber over $J$ is the complex manifold $M_J$ endowed with the complex structure $J$. 

For each $k\in\mathbb{N}$, the construction of \cite{bismut1988analytic3} provides a determinant line bundle $\lambda^{(k)}$ on $\mathcal{J}_{int}$, constructed from $L^k$. The group $\mathcal{G}$ acts equivariantly on $\lambda^{(k)}\rightarrow\mathcal{J}_{int}$ and preserves the Chern connection associated with the \textit{Quillen metric} $||\cdot||$. From this geometric setup, for each $k\in \mathbb{N}$ we can derive the moment map corresponding to the Chern curvature form $\Omega^{(k)}$ of $\lambda^{(k)}$. By taking the $k\rightarrow\infty$ limit and comparing the coefficients in the asymptotic expansion, we obtain a sequence of scalar quantities $\mu_j$ on K\"ahler metrics, which have a natural interpretation as moment maps. 

The quantities $\mu_j$ are obtained by differentiating the logarithm of the Quillen metric with respect to the K\"ahler potential, while keeping the complex structure $J$ fixed. This is natural from the viewpoint of the Kempf--Ness theorem, where the moment map is obtained by differentiating the log-norm functional in a complexified orbit direction. In our case, a genuine complexification of $\mathcal{G}$ does not exist, but varying the K\"ahler potential while fixing $J$ can be interpreted as moving along a complexified orbit. That is, we obtain the following theorem. For precise definitions of the objects appearing in the statement, see Sections \ref{Zcrit} and \ref{momentproof}.

\begin{mainthm}\label{thm:mainA}
Fix a complex structure $J$ and consider the derivative of the logarithm of the Quillen metric with respect to the K\"ahler potential $\varphi$. Taking the ratio with the Quillen metric associated with the fixed K\"ahler potential, we may regard it as a scalar. Its asymptotic expansion as $k\rightarrow \infty$ has the following form:
\begin{equation*}
\frac{\delta}{\delta\varphi}\log \frac{||\cdot||^2_{\omega_\varphi,h^ke^{-k\varphi}}}{||\cdot||^2_{\omega,h^k}}=\sum_{j=0}^{n+1} k^{n+1-j} \int_M \delta\varphi \, \mu_j\left(g_{(\omega_\varphi,J)}\right) \,\frac{\omega_{\varphi}^n}{n!}
\end{equation*}
for some functions $\mu_j\in C^\infty(M)$ which depend on the curvature of the K\"ahler metric $g_{(\omega_\varphi,J)}$.
Then the map $\mu_j:\mathcal{J}_{int}\rightarrow C^\infty(M)$ defined by 
\begin{equation*}
\mu_j(J'):=\mu_j\left(g_{(\omega,J')}\right)
\end{equation*}
is a moment map for the action $\mathcal{G}\curvearrowright (\mathcal{J}_{int},\Omega_j)$, where $\Omega_j\in \mathcal{A}^2(\mathcal{J}_{int})$ is given by the following fiber integral
\begin{equation*}
\Omega_j=\int_{\mathcal{X}/\mathcal{J}_{int}}\Td_j(TM,g_{(\omega,J)})\ch_{n+1-j}(\widetilde{L},\widetilde{h}).
\end{equation*}
\end{mainthm}

Note that the asymptotic expansion of $\log||\cdot||^2_{\omega_\varphi,h^ke^{-k\varphi}}$ naturally provides, via its expansion coefficients, a sequence of functionals on the space of K\"ahler potentials, and, upon differentiation in the direction of holomorphic vector fields, yields higher Futaki invariants \cite{futaki2004asymptotic}; see \cite[Section 4]{eum2025partition}. In \cite{wernerwendland}, critical points of the logarithm of the Quillen metric corresponding to a certain virtual bundle are identified with cscK metrics. Here, instead, we consider the sequence of Quillen metrics for each $k$, and cscK metrics, together with higher-order generalizations, appear as critical points of the coefficients in the large $k$ asymptotic expansion of the logarithm of the Quillen metric.

In \cite{dervan2023universal}, Dervan--Hallam introduced the notion of \textit{Z--critical equations} on K\"ahler metrics, which serve as the analytic counterpart of $Z$--stability \cite{dervan2023stability}, modeled on Bridgeland stability conditions. We will show that $\mu_j$ are examples of $Z$--critical equations with central charge given by the Todd class. In \cite{dervan2023universal}, Dervan--Hallam also established a stronger moment map property of $Z$--critical equations, in particular including the $\mu_j$, by a different method. We will explain in Section \ref{spaceof} in what sense their result is stronger. 

For $j=1,2$, we have explicit formulas for $\mu_j$; see \eqref{expformula}. Together with Fujiki's fiber integral formula \cite[Theorem 4.4]{fujiki1992moduli}, this shows that the case $j=1$ is precisely the Donaldson--Fujiki moment map picture of scalar curvature in the integrable case. We also note that the result of \cite{foth2007manifold} can be reinterpreted as the following generalization of Fujiki’s fiber integral formula. Here, $a_j(\omega)$ denotes the $j^{th}$ coefficient in the TYZ expansion of the Bergman kernel, and $\beta_j$ denotes the $j^{th}$ local coefficient in the asymptotic expansion of the Ray--Singer torsion \cite{finski2018full}. See Section \ref{Zcrit} for further exposition.

\begin{mainthm}\label{thm:mainB}
For each $j\geq 1$, as a $2$-form on $\mathcal{J}_{int}$,
\begin{equation*}
\int_{\mathcal{X}/\mathcal{J}_{int}}\Td_j(TM,g_{(\omega,J)})\ch_{n+1-j}(\widetilde{L},\widetilde{h})=\frac{-1}{8}\int_M \Tr(J\cdot\cdot)a_{j-1}(g_{(\omega,J)})\frac{\omega^n}{n!}-i\partial\bar{\partial}\beta_{j-1}.
\end{equation*}
\end{mainthm}
From the explicit expressions for $\beta_0, \beta_1$ \cite{vasserot1989asymptotics,finski2018full}, it follows that $\partial\bar{\partial}\beta_{j-1}=0$ for $j=1,2$. In particular, the $j=1$ case coincides with \cite[Theorem 4.4]{fujiki1992moduli}.

So far, the only explicitly computed $Z$--critical equations have been those corresponding to powers of the first Chern class of $M$ \cite[Section 2.2]{dervan2023stability}. In Appendix \ref{A}, we introduce a simple trick and compute the case corresponding to $\ch_2$, which might be useful analytically.

\begin{remark}[Remark on deformation quantization]\label{deforquant}
Substituting $k=\hbar^{-1}$ in Theorem \ref{thm:mainA} and its proof, one obtains an equation of the form
\begin{equation}\label{qm}
    d\langle\mu, f\rangle=\iota_{\widehat{f}}\Omega
\end{equation}
where 
\begin{equation*}
\langle\mu, f\rangle=\sum_{j=0}^{\infty}\hbar^{j}\langle\mu_{j+1},f\rangle \in C^{\infty}(\mathcal{J}_{int})[[\hbar]]
\end{equation*}
is a formal function, $\Omega=\sum_{j=0}^{\infty}\hbar^j \Omega_{j+1}$ is a formal deformation of (the minus of) the Donaldson-Fujiki K\"ahler form $\Omega_1$, and $f\in C^{\infty}(M)\simeq \mathrm{Lie}(\mathcal{G})$. Let $\widetilde{\mu}:C^\infty(M)\rightarrow C^\infty(\mathcal{J}_{int})$ be defined by $\widetilde{\mu}(f):=\langle\mu, f\rangle$. It is known \cite[Deduction 4.1]{muller2004invariant} that equation (\ref{qm}) is equivalent to $\widetilde{\mu}$ being a \textit{quantum Hamiltonian} for the action of $\mathcal{G}$ on the invariant star product of Wick type with Karabegov form $\Omega$. It is also easy to see that $\widetilde{\mu}$ is formally a \textit{quantum moment map}; see \cite{xu1998fedosov,muller2004some,gutt2003natural,futaki2021quantum} and \cite{la2021formal,la2022scalar} for related results on deformation quantization. We also note that a similar deformation of the K\"ahler form appears in
\cite[(9.4)]{takhtajan2006quantum}, and it would be interesting to understand its relation to
$\Omega$.
\end{remark}

\subsection*{Acknowledgements}
The author would like to thank Ruadhaí Dervan for helpful comments concerning \cite{dervan2023universal} and for kindly sharing a recent version of the manuscript prior to publication. This work was supported by the National Research Foundation of Korea (NRF) grant funded by the Korea government (MSIT) RS-2024-00346651.

\section{Preliminaries}

\subsection{Moment maps and equivariant line bundles}\label{equivline}

The conventions and notation in this subsection follow those of \cite[Section 7.1]{BerlineGetzlerVergne2004}. Let $M$ be a symplectic manifold with symplectic 2-form $\Omega$. For $f\in C^\infty(M)$, the \textit{Hamiltonian vector field} defined by $f$ and $\Omega$ is the unique vector field $X_f$ such that
\begin{equation}\label{hamilton}
df=\iota_{X_f}\Omega.
\end{equation}
Assume that a Lie group $G$ acts on $M$ by $(g,x)\mapsto g.x$ for $g\in G, x\in M$. For $X\in \mathfrak{g}:=\mathrm{Lie}(G)$, we define the \textit{fundamental vector field} $\widehat{X}$ on $M$ as
\begin{equation*}
\widehat{X}_x:=\left.\frac{d}{dt}\right|_{t=0}\exp(-tX).x \in T_x M.
\end{equation*}
Then the map $\mathfrak{g}\ni X\mapsto \widehat{X}\in \Gamma (TM)$ is a Lie algebra homomorphism.

\begin{definition}
Let $G\curvearrowright (M,\Omega)$ be a symplectic action. We say that the action is \textit{Hamiltonian} if there is a \textit{moment map}
\begin{equation*}
\mu : M \rightarrow \mathfrak{g}^*
\end{equation*}
satisfying two conditions:
\begin{enumerate}
    \item Equivariance: $\quad \forall g\in G, \forall x\in M, \forall X\in \mathfrak{g},\quad \langle \mu_{g.x}, X\rangle=\langle \mu_{x}, \rm{Ad}_{g^{-1}}X\rangle$;
    \item Moment map equation: $\forall X\in \mathfrak{g},\quad d(x\mapsto \langle \mu_x,X\rangle)=\iota_{\widehat{X}}\Omega$.
\end{enumerate}
\end{definition}
One situation in which such a moment map can be derived is the following. Suppose there exists a $G$-equivariant complex line bundle $L$ on $M$ endowed with a $G$-invariant connection $\nabla^{L}$ whose curvature is
\begin{equation*}
\left(\nabla^L \right)^2=-i\Omega \in \mathcal{A}^2(M).
\end{equation*}
The group $G$ acts on the space of sections $\Gamma(M,L)$ by the formula
\begin{equation}\label{pullback}
(g.s)(x)=g^{L}.(s(g^{-1}.x)).
\end{equation}
For $X\in\mathfrak{g}$, we denote by $\mathcal{L}^L(X)$ the corresponding infinitesimal action on $\Gamma(M,L)$,
\begin{equation*}
\mathcal{L}^L(X)s=\left.\frac{d}{dt}\right|_{t=0}\exp(tX).s
\end{equation*}
which is called the \textit{Lie derivative}. For each $X$, define
\begin{equation}\label{moment}
\widetilde{\mu}(X)=\mathcal{L}^L(X)-\nabla^L_{\widehat{X}} \in \Gamma(M,\End{L})\simeq C^\infty(M,\mathbb{C}).
\end{equation}
Then, by \cite[Example 7.9]{BerlineGetzlerVergne2004}, the map $\mu : M\rightarrow \mathfrak{g}^*$ defined by $\langle \mu_x,X\rangle=i\widetilde{\mu}(X)$ is the moment map as defined above. That is,
\begin{equation*}
d\bigl(i\widetilde{\mu}(X)\bigr)=i\iota_{\widehat{X}}\left(\nabla^L \right)^2=\iota_{\widehat{X}}\Omega.
\end{equation*}
Note that, in this case, the nondegeneracy condition on $\Omega$ is not essential, and we still call such a $\mu$ a moment map even without the nondegeneracy condition (see \cite{dervan2023universal} for a similar convention).

Now consider the holomorphic case. Suppose $(M,J,\Omega)$ is a K\"ahler manifold and $(L,h)$ is a Hermitian holomorphic line bundle on $M$ whose Chern curvature form is $-i\Omega$. If $G$ acts equivariantly on $(L,h)\rightarrow M$ by biholomorphisms preserving $h$, we can apply the previous construction of the moment map to the Chern connection. Let $\nabla^h$ be the Chern connection compatible with $h$, and let $\mu^h$ be the moment map obtained from the previous construction. If $g\in C^\infty(M)^G$ is a $G$-invariant smooth function on $M$, then the $G$-action also preserves $he^{-g}$, and we obtain a moment map for the new $2$-form $\Omega+i\partial\bar{\partial}g$. The difference between the two moment maps is given by
\begin{equation}\label{compare}
\widetilde{\mu}^{he^{-g}}(X)-\widetilde{\mu}^h(X)=-\nabla^{he^{-g}}_{\widehat{X}}+\nabla^{h}_{\widehat{X}}=\iota_{\widehat{X}} \partial g\in C^\infty(M,\mathbb{C}).
\end{equation}

\subsection{Determinant line bundles and Quillen metrics}\label{bgs}

We review the construction of the determinant line bundle and the Quillen metric following \cite{bismut1988analytic3}. Let $\pi : \mathcal{X}\rightarrow B$ be a proper holomorphic submersion of relative dimension $n$ with connected fibers $M_y:=\pi^{-1}(y)$. Let $TM$ be the relative (vertical) tangent bundle and $T^H \mathcal{X}$ be a horizontal subbundle of $T\mathcal{X}$ such that $T\mathcal{X}=T^H \mathcal{X} \oplus TM$. For $Y\in TB, Y^H\in T^H \mathcal{X}$ is the horizontal lift of $Y$ in $T\mathcal{X}$ so that $\pi_*(Y^H)=Y$. Let $g^M$ be a smooth family of K\"ahler metrics on the fibers. The triple $(\pi,g^M,T^H\mathcal{X})$ is called a \textit{K\"ahler fibration} if there exists a closed smooth $(1,1)$-form $\omega$ on $\mathcal{X}$ such that $T^H \mathcal{X}$ and $TM$ are orthogonal with respect to $\omega$ and the restriction $\omega|_{M_y}$ of $\omega$ to each fiber defines a K\"ahler metric $g^M$. We say that $\omega$ is associated with $(\pi,g^M,T^H\mathcal{X})$. Note that this notion is equivalent to the notion of a \textit{refined metrically polarized family} in \cite[Remark 3.2]{fujiki1990moduli}.

Assume $(\pi,g^M,T^H\mathcal{X})$ is a K\"ahler fibration. Let $\xi$ be a Hermitian holomorphic vector bundle on $\mathcal{X}$. For $0\leq p\leq n$, $E^p$ denotes the set of smooth sections over $\mathcal{X}$ of $\Lambda^p(T^{*(0,1)}M)\otimes\xi$. For each $y\in B$, the fiber $E^p_y$ is the set of smooth sections over $M_y$ of $\Lambda^p(T^{*(0,1)}M)\otimes\xi$. Set $E=\oplus_p E^p$. For each $y\in B$, $E_y$ is endowed with the Hermitian product
\begin{equation*}
s,s'\in E_y \mapsto \int_{M_y}\langle s,s'\rangle_{g^M,h^{\xi}}(x)d\mathrm{vol}_{g^M}.
\end{equation*}
Let $\nabla$ be the Chern connection on the Hermitian holomorphic vector bundle $\Lambda(T^{*(0,1)}M)\otimes\xi$ on $\mathcal{X}$. For $0\leq p \leq n$, let $\widetilde{\nabla}$ be the connection on $E^p$ such that if $s$ is a smooth section of $E^p$ and if $Y\in TB$,
\begin{equation*}
\widetilde{\nabla}_Y s = \nabla_{Y^H} s.
\end{equation*}
Then $\widetilde{\nabla}$ preserves the Hermitian product of $E$ \cite[Theorem 1.14]{bismut1988analytic2}.

Define $D_y=\bar{\partial}_y+{\bar{\partial}_y}^*$ acting on $E_y$. Then $\mathrm{Spec}(D_y^2)$ is discrete for all $y\in B$. Let $K^{b,p}_y$ be the sum of the eigenspaces of the operator $D^2_y$ acting on $E^p_y$ with eigenvalues $<b$. Let $U^b$ be the open set $U^b=\{ y\in B : b\notin \mathrm{Spec}(D_y^2) \}$. On $U^b$, define the line bundle \begin{equation*}
\lambda^b=\otimes_p (\det K^{b,p})^{(-1)^{p+1}}.
\end{equation*}
Then $\lambda^b, b>0$ patch into the $C^\infty$ line bundle $\lambda$ over $B$. As a smooth subbundle of $E$ over $U^b$, $K^b=\oplus_p K^{b,p}$ inherits the Hermitian product, which induces a smooth Hermitian metric $|\cdot|^b$ on $\lambda^b$. For $y\in U^b$, let $P^b_y$ be the orthogonal projection operator from $E_y$ onto $K^b_y$. Define the connection $\nabla^b$ on $K^b$ over $U^b$ by
\begin{equation*}
\nabla^b s=P^b \widetilde{\nabla} s.
\end{equation*}
Then $\nabla^b$ preserves the metric on $K^b$ and induces a connection ${}^0\nabla^b$ on $\lambda^b$ compatible with $|\cdot|^b$. Define a holomorphic structure on $\lambda^b$ over $U^b$ so that ${}^0\nabla^b$ is the Chern connection for $(\lambda^b,|\cdot|^b)$. Such holomorphic structures on $(\lambda^b,U^b)$ patch into a uniquely defined holomorphic structure on $\lambda$ over $B$ \cite[Theorem 1.3]{bismut1988analytic3}.

For $y\in U^b$, let $\tau_y(\bar{\partial}^{(b,\infty)})$ be the Ray-Singer analytic torsion \cite{ray1973analytic} defined in \cite[Definition 1.5]{bismut1988analytic3}. Let $||\cdot||^b=|\cdot|^b \tau_y(\bar{\partial}^{(b,\infty)})$ be the metric. Let 
\begin{equation*}
{}^1\nabla^b={}^0\nabla^b+\partial^B\log \tau^2(\bar{\partial}^{(b,\infty)})
\end{equation*}
be the connection on the line bundle $\lambda^b$ on $U^b$. Then the metrics $||\cdot||^b$ patch into the smooth metric $||\cdot||$ on $\lambda$, called the \textit{Quillen metric}, and the connections ${}^1\nabla^b$ patch into the Chern connection for $(\lambda,||\cdot||)$ \cite[Theorem 1.6]{bismut1988analytic3}. The curvature of ${}^1\nabla$ is computed as follows.

\begin{theorem}[BGS curvature formula, \protect{\cite[Theorem 1.9]{bismut1988analytic3}}]
The curvature of $^1\nabla$ is given by
\begin{equation}\label{Qcurv}
\left( {}^1\nabla\right)^2=i\left[\int_{\mathcal{X}/B}\Td(TM,g^M)\ch(\xi,h^{\xi})\right]^{(2)},
\end{equation}
where $[\cdot]^{(2)}$ denotes the component of degree $2$.
\end{theorem}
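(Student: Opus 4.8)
The plan is to reproduce the Bismut--Gillet--Soul\'e argument. The statement is local on $B$, so I would fix $b>0$ and work over $U^b$, where $\lambda=\lambda^b=\bigotimes_p(\det K^{b,p})^{(-1)^{p+1}}$ is a genuine finite--rank determinant line bundle carrying the $L^2$ metric $|\;|^b$, and the Quillen metric is $||\;||^b=|\;|^b\,\tau(\bar\partial^{(b,\infty)})$. Because Chern curvature is additive when a Hermitian metric is scaled by a positive function, it then suffices to compute separately the Chern curvature of $(\lambda^b,|\;|^b)$ and the term coming from $-\partial\bar\partial\log\tau^2(\bar\partial^{(b,\infty)})$, show that their sum equals $i\,[\int_{\mathcal{X}/B}\Td(TM,g^M)\ch(\xi,h^\xi)]^{(2)}$, and check that the identity does not depend on $b$, so that the local computations patch into the global formula \eqref{Qcurv}.

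The engine is Bismut's superconnection for the K\"ahler fibration $(\pi,g^M,T^H\mathcal{X})$. On the $\mathbb{Z}_2$--graded bundle $E=\bigoplus_p E^p$ one forms $B_t=\widetilde{\nabla}+\sqrt t\,D+\tfrac{1}{4\sqrt t}c(T)$, where $D=\bar\partial+\bar\partial^*$ is the fibrewise Dolbeault operator and $T$ the torsion of the horizontal distribution. I would use two standard facts about its rescaled Chern character $\gamma_t=\Tr_s[\exp(-B_t^2)]$ (normalized as in \cite{bismut1988analytic3}): it is a closed form on $B$ whose de Rham class is independent of $t$, and $\partial_t\gamma_t$ is $d_B$--exact. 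Then I would analyze the two ends of the $t$--range. As $t\to0$, Bismut's local families index theorem (proved via Getzler--type rescaling and heat--kernel estimates in \cite{bismut1988analytic2,bismut1988analytic3}) gives $\gamma_t\to\int_{\mathcal{X}/B}\Td(TM,g^M)\ch(\xi,h^\xi)$ in the $C^\infty$ topology. As $t\to\infty$, over $U^b$ one splits the supertrace along the $D$--invariant, metric--orthogonal decomposition $E=K^b\oplus(K^b)^\perp$: on $(K^b)^\perp$ the fibrewise complex is acyclic and the corresponding supertrace tends to $0$, while the contribution of the finite--rank bundle $K^b$ records, in degree $2$, the Chern curvature of its determinant line $(\lambda^b,|\;|^b)$.

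Next I would pin down the torsion correction. Extracting degree $2$, the forms $\gamma_0^{(2)}$ (the topological integrand) and $\gamma_\infty^{(2)}$ (the $L^2$ curvature of $\lambda^b$) differ by a $d_B$--exact $2$--form, and to make it explicit I would insert the number operator $N$ and work out the small--$t$ and large--$t$ asymptotic expansions of $\Tr_s[N\exp(-B_t^2)]$. Feeding these expansions into the Mellin transform that defines $\tau(\bar\partial^{(b,\infty)})$ identifies this $d_B$--exact difference, in degree $2$, with a multiple of $\bar\partial\partial\log\tau^2(\bar\partial^{(b,\infty)})$ fixed by the conventions of \cite{bismut1988analytic3}; equivalently, $-\partial\bar\partial\log\tau^2(\bar\partial^{(b,\infty)})$ is exactly the correction that turns the $L^2$ curvature of $\lambda^b$ into the $t$--independent representative $\gamma_0^{(2)}$. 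Adding the two contributions shows that the Chern curvature of $(\lambda,||\;||)=(\lambda^b,||\;||^b)$ equals $i\,[\int_{\mathcal{X}/B}\Td(TM,g^M)\ch(\xi,h^\xi)]^{(2)}$ --- the overall factor $i$ bookkeeping the convention $(\nabla^L)^2=-i\Omega$ together with the normalization of $\ch$ used above --- and patching over $b$ gives \eqref{Qcurv}.

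The main obstacle is the analytic content of the two limits, which is the heart of \cite{bismut1988analytic2,bismut1988analytic3}: the $t\to0$ asymptotics amount to the local index theorem for the Bismut superconnection, and its number--operator refinement (needed for the torsion transgression) is of comparable depth, both relying on sharp heat--kernel estimates and a Getzler rescaling adapted to families. A second, more bookkeeping-level difficulty is that $\dim H^p(M_y,\xi)$ may jump, so one cannot work with an honest cohomology bundle; the spectral truncations $K^b$ over $U^b$ and the relative torsions $\tau(\bar\partial^{(b,\infty)})$ are introduced precisely to circumvent this, and one must verify that the connections, metrics, and transgression identities patch consistently as $b$ varies and are independent of auxiliary choices of $T^H\mathcal{X}$ beyond the stated K\"ahler--fibration data. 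I would take as input the smoothness of the Quillen metric and the fact that $^1\nabla$ is its Chern connection, quoted above from \cite[Theorem 1.6]{bismut1988analytic3}.
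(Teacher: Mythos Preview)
The paper does not prove this theorem; it simply quotes it as \cite[Theorem~1.9]{bismut1988analytic3} and uses it as a black box. There is therefore no ``paper's own proof'' to compare your proposal against. Your outline is a reasonable high-level sketch of the original Bismut--Gillet--Soul\'e argument (superconnection heat kernel, local families index theorem at $t\to0$, $L^2$ curvature of the spectral truncation at $t\to\infty$, and the analytic torsion as the transgression between them), which is indeed how the result is established in the cited reference.
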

The same formula also holds for \textit{locally K\"ahler fibrations} \cite[Theorem 1.27]{bismut1988analytic3}.

\subsection{Space of compatible almost complex structures}\label{spaceof}

We follow the conventions of \cite[Section 1.3]{scarpa2020hitchin}, except for the opposite sign convention in (\ref{hamilton}). Fix a compact complex manifold $M$ and a K\"ahler form $\omega$, and let $\mathcal{J}$ be the set of \textit{compatible almost complex structures} on $M$, i.e.
\begin{equation*}
\mathcal{J}=\{ J\in \Gamma(M,\End TM):J^2=-\mathrm{Id},\; \omega(J\cdot,J\cdot)=\omega(\cdot,\cdot),\; \omega(\cdot,J\cdot)>0 \}.
\end{equation*}
For $J\in\mathcal{J}$, denote by $g_{(\omega,J)}$ the Riemannian metric on $M$ defined by $\omega$ and $J$:
\begin{equation*}
g_{(\omega,J)}(\cdot,\cdot)=\omega(\cdot,J\cdot).
\end{equation*}
The set $\mathcal{J}$ has the natural structure of an infinite-dimensional Fr\'echet manifold. The tangent space of $\mathcal{J}$ at $J$ is given by
\begin{equation*}
T_J \mathcal{J}=\{A\in \Gamma(M,\End TM):AJ+JA=0,\;\omega(A\cdot,J\cdot)+\omega(J\cdot,A\cdot)=0 \}.
\end{equation*}
Define the almost complex structure $\mathbb{J}$ on $\mathcal{J}$ by setting
\begin{align*}
\mathbb{J}_J : T_J \mathcal{J}&\rightarrow T_J \mathcal{J}\\
 A &\mapsto JA.
\end{align*}
This gives $\mathcal{J}$ the structure of a \textit{complex} Fr\'echet manifold \cite{koiso1990complex}.

The group $\mathrm{Ham}(M,\omega)$ of Hamiltonian diffeomorphisms of $M$ acts naturally on $\mathcal{J}$ by pullback: for $\psi\in \mathrm{Ham}(M,\omega), J\in \mathcal{J}$,
\begin{equation}\label{actofham}
\psi.J:=(\psi^{-1})^* J=\psi_*\circ J \circ \psi^{-1}_*. 
\end{equation}
The Lie algebra of $\mathrm{Ham}(M,\omega)$ can be identified with $C^\infty_0(M)$ via $-X_f \simeq f$ (\ref{hamilton}). The fundamental vector field on $\mathcal{J}$ of a function $f\in C^\infty_0(M)$ is then given by
\begin{equation}\label{infaction}
\widehat{f}_J=\left.\frac{d}{dt}\right|_{t=0}\exp(-tf).J=\left.\frac{d}{dt}\right|_{t=0}\left(\Phi^{t}_{-X_f} \right)^*J=\mathcal{L}_{-X_f}J,
\end{equation}
where $\Phi^t_{-X_f}$ is the flow of the Hamiltonian vector field $-X_f$. One can check that $\mathcal{L}_{\widehat{f}}\mathbb{J}=0$ for all $f$, that is, $\mathrm{Ham}(M,\omega)$ acts on $\mathcal{J}$ by biholomorphisms.

Let $\mathcal{J}_{int}$ denote the space of \textit{integrable} compatible almost complex structures, $\mathcal{J}_{int}=\{J\in\mathcal{J}: N_J=0\}$, where $N_J$ is the Nijenhuis tensor of $J$. For $J\in\mathcal{J}_{int}$, $(g_{(\omega,J)},J,\omega)$ is a K\"ahler triple. With respect to $\mathbb{J}$, $\mathcal{J}_{int}$ is a complex submanifold of $\mathcal{J}$ on its smooth locus. 

In fact, it is an analytic subset of $\mathcal{J}$ because of the possible presence of singularities; however, we do not address this issue in the present paper. Here, we restrict attention to finite-dimensional complex submanifolds of $\mathcal{J}_{int}$, and adopt the convention that $\mu$ is a moment map on $\mathcal{J}_{int}$ if it is a moment map on every such submanifold. This is weaker than requiring $\mu$ to be the restriction of a moment map on the full space of almost complex structures, as proved in \cite{dervan2023universal}, which generalizes \cite{donaldson1997remarks}. We emphasize that this choice is not merely a technical restriction: in the proof of the moment map equation, specifically in (\ref{integrability}), we explicitly use the integrability of the almost complex structure $J$.

We now construct a universal family over $\mathcal{J}_{int}$. Let $M_J:=(M,J)$ be the complex manifold corresponding to $J\in\mathcal{J}_{int}$. Then the natural map $\pi : \mathcal{X} = \bigcup_J M_J \rightarrow \mathcal{J}_{int}$ gives the \textit{universal family} of K\"ahler manifolds $(M_J,g_{(\omega,J)})$ depending holomorphically on $J\in\mathcal{J}_{int}$. More precisely, we define $\mathcal{X}:=M\times\mathcal{J}_{int}$ with complex structure
\begin{equation*}
\mathbb{J}^{\mathcal{X}}_{(x,J)}(X,A):=(J_x X,\mathbb{J}_J A)
\end{equation*}
for $X\in T_x M$ and $A\in T_J \mathcal{J}_{int}$. It can be verified that this almost complex structure is integrable.
 
Assume that the de Rham class $[\omega]$ of $\omega$ is integral. Then there is a triple $P=(L,h,\nabla)$, called a \textit{prequantization}, consisting of a complex line bundle $L$ on $M$, a Hermitian metric $h$ on $L$, and a connection $\nabla$ compatible with $h$, such that $\nabla^2=-i\omega$. Fix a prequantization $P$ of $(M,\omega)$. For each $J\in\mathcal{J}_{int}$, the pair $(J,\nabla)$ uniquely determines a holomorphic structure on $L$ so that $\nabla$ becomes the Chern connection of the resulting holomorphic line bundle $(L,h)$ on $M_J$. We denote this structure by $(L_J,h_J)$. These define a Hermitian holomorphic line bundle $(\widetilde{L},\widetilde{h})$ on $\mathcal{X}$, where $\widetilde{L}=\bigcup_J L_J, \widetilde{h}=\{h_J\}$. In this case, $(\pi:\mathcal{X}\rightarrow \mathcal{J}_{int}, \cc_1(\widetilde{L},\widetilde{h}), \widetilde{L})$ is a \textit{metrically polarized family of Hodge manifolds} in the sense of \cite[Definition 3.8]{fujiki1990moduli}. In particular, it is a K\"ahler fibration, as is clear from the construction of $\mathcal{X}$; see also \cite[Remark 3.4]{fujiki1990moduli}.

Let $\mathcal{G}=\mathrm{Aut}(P)$ be the group of bundle diffeomorphisms of $L$ that preserve $h$ and $\nabla$. Then any element of $\mathcal{G}$ induces an element of $\mathrm{Ham}(M,\omega)$ on the base $M$, and acts on $\mathcal{J}_{int}$ by (\ref{actofham}). We will prove the moment map property for the action of $\mathcal{G}$ on $\mathcal{J}_{int}$ with respect to the appropriate closed $\mathcal{G}$-invariant $2$-forms $\Omega_j$ on $\mathcal{J}_{int}$.

\section{Main results}

\subsection{Variation of the Quillen metric and the Z--critical equations}\label{Zcrit}

In this section we fix a complex structure $J$ on $M$ and consider the variation of K\"ahler metrics within the K\"ahler class $[\omega]$. Let $\mathcal{K}_\omega=\{\varphi\in C^\infty(M):\omega_\varphi=\omega+i\partial\bar{\partial}\varphi>0 \}$ be the space of K\"ahler potentials. Our moment maps will be derived by taking the derivative of the asymptotic expansion of the Quillen metric with respect to the K\"ahler potential $\varphi$. We start by recalling the results of \cite{eum2025partition}; see \textit{ibid.} for further details.

Let $(M,\omega)$ be a polarized K\"ahler manifold with a polarizing line bundle $L$, that is, $\cc_1(L)=[\omega]$. For each K\"ahler metric $\omega_\varphi\in [\omega]$, there exists a Hermitian metric $h_\varphi$ on $L$ such that $(\nabla^{h_\varphi})^2=-i\omega_\varphi$. For example, it can be given by $h_\varphi=he^{-\varphi}$, which is determined up to multiplication by a constant.

Let $\lambda(L^k)$ be the determinant line constructed in Section \ref{bgs} with $\xi=L^k$, $B=\{\mathrm{point}\}$. Let $||\cdot||_{\omega,h^k}$ and $||\cdot||_{\omega_\varphi,h^ke^{-k\varphi}}$ be the Quillen metrics constructed from the data $g^M=g_\omega, h^\xi=h^k$ and $g^M=g_{\omega_\varphi}, h^\xi=h^ke^{-k\varphi}$, respectively. We compute the asymptotic expansion of $\log ||\cdot||_{\omega,h^k}-\log ||\cdot||_{\omega_\varphi,h^ke^{-k\varphi}}$ as $k\rightarrow \infty$ by two different methods. 

\textit{Method 1:} By construction, one can obtain the asymptotic expansion from the asymptotic expansions of (a) the $L^2$-product on $\det H^0(M,L^k)$ and (b) the Ray-Singer analytic torsion $\tau_{(\omega,h^k)}:=\tau_{(\omega,h^k)}(\bar{\partial}^{(0,\infty)})$. The asymptotic expansions of the $L^2$-products can be obtained from Donaldson's lemma and the TYZ expansion of the Bergman kernel (see, for example, \cite{ma2007holomorphic} for a comprehensive exposition). Denote by $a_j(\omega)$ the $j^{th}$ coefficient in the TYZ expansion, that is,
\begin{equation*}
\rho_k(\omega)=\sum_{j=0}^\infty k^{n-j} a_j(g_\omega) 
\end{equation*}
where $\rho_k(\omega)$ is the diagonal of the $k^{th}$ Bergman kernel associated with $(\omega,h^k)$. The coefficients $a_j(g_\omega)$ are smooth functions given by universal polynomials in the curvature of $g_\omega$ and its derivatives. Moreover, in the case where $M$ is the fiber of a proper holomorphic submersion, the coefficients are smooth over the base of the family, and derivatives over the base commute with the asymptotics \cite[Theorem 1.6]{ma2023superconnection}. Then, by \cite[Lemma 2]{donaldson2005scalar}, we get
\begin{equation*}
2\log \frac{|\cdot|_{\omega,h^k}}{|\cdot|_{\omega_\varphi,h^ke^{-k\varphi}}}=\sum_{j=0}^\infty k^{n+1-j}\int_0^1 dt\int_M \dot{\varphi_t}(\Delta_t a_{j-1}(g_t)-a_{j}(g_t))\frac{\omega_t^n}{n!}
\end{equation*}
for any smooth path $\varphi_t$ in $\mathcal{K}_\omega$ joining $0$ to $\varphi$. We use a subscript to denote objects associated with $\omega_t, \omega_\varphi$, etc. When no subscript is used, the object is understood to be associated with $\omega$.

For the Ray--Singer analytic torsion $\tau_{(\omega,h^k)}$, Finski established the full asymptotic expansion generalizing the result of \cite{vasserot1989asymptotics}:
\begin{theorem}[\protect{\cite[Theorem 1.1, Remark 1.2]{finski2018full}}]\label{Finski}
There are local coefficients $\alpha_j, \beta_j\in\mathbb{R}$ for $j\in\mathbb{N}$, such that as $k\rightarrow\infty$,
\begin{equation}\label{fintor}
-2\log \tau_{(\omega,h^k)}=\sum_{j=0}^{\infty}k^{n-j}(\alpha_j\log k + \beta_j(g_\omega)).
\end{equation}
The coefficients $\alpha_j$ do not depend on $\omega$. Moreover, in the case where $M$ is the fiber of a proper holomorphic submersion, 
the coefficients are smooth over the base of the family, and derivatives 
over the base commute with the asymptotics (\ref{fintor}).
\end{theorem}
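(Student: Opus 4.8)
The plan is to prove the expansion by combining the Mellin-transform representation of the Ray--Singer torsion with the semiclassical (Bismut--Vasserot) rescaling of the Kodaira Laplacian $\Box_p^{(k)}=\bar\partial\,\bar\partial^*+\bar\partial^*\bar\partial$ acting on $\Omega^{0,p}(M,L^k)$. Write $\theta_k(s)=-\sum_{p\ge1}(-1)^p p\,\zeta_{p,k}(s)$ with $\zeta_{p,k}(s)=\frac{1}{\Gamma(s)}\int_0^\infty t^{s-1}\big(\Tr e^{-t\Box_p^{(k)}}-\dim\ker\Box_p^{(k)}\big)\,dt$, so that $-2\log\tau_{(\omega,h^k)}=\theta_k'(0)$ up to the standard normalization. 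For $k$ large, Kodaira--Nakano vanishing gives $\ker\Box_p^{(k)}=0$ for $p\ge1$, so the subtraction is vacuous and one only analyzes the nonzero spectrum. First I would split $\int_0^\infty=\int_0^{1}+\int_1^\infty$: the tail $\int_1^\infty$ is controlled by a uniform spectral gap $\Box_p^{(k)}\ge c\,k$ on $p\ge1$ forms (Bochner--Kodaira--Nakano together with the positivity of $\cc_1(L)$), so it contributes only $O(e^{-ck})$ and is harmless.

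The analytic heart is a uniform-in-$k$ asymptotic expansion of the pointwise heat kernel on the diagonal. Rescaling $t\mapsto u/k$ and introducing normal coordinates around a point $x$ rescaled by $k^{-1/2}$, the operator $k^{-1}\Box_p^{(k)}$ converges to a model harmonic oscillator $\mathcal{L}$ on $\mathbb{C}^n$ acting on $\Lambda^\bullet(\overline{\mathbb{C}^n})^*$, whose diagonal heat kernel is explicit. Following the Getzler-type rescaling and Duhamel expansion used by Bismut--Vasserot \cite{vasserot1989asymptotics} and refined in \cite{finski2018full}, the diagonal heat kernel of $k^{-1}\Box_p^{(k)}$ then admits, for $u$ in compact subsets of $(0,\infty)$, an asymptotic expansion
\begin{equation*}
e^{-\tfrac{u}{k}\Box_p^{(k)}}(x,x)=\sum_{r=0}^{N} k^{\,n-r/2}\,\mathcal{I}_{r,p}(u,x)+O\!\big(k^{\,n-(N+1)/2}\big),
\end{equation*}
where the coefficients $\mathcal{I}_{r,p}$ are universal polynomials in the curvature of $g_\omega$ (and of $L$) and its covariant derivatives, and where the $\mathbb{Z}_2$-parity of the model forces $\mathcal{I}_{r,p}\equiv0$ for odd $r$. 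Integrating over $M$, one needs in addition uniform control of the $\mathcal{I}_{r,p}(u,x)$ and of the remainder as $u\to0^+$ (governed by the ordinary small-time heat expansion, polynomial in $1/u$) and as $u\to\infty$ (again the spectral gap of $\mathcal{L}$), so that the Mellin integral converges after analytic continuation.

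With these ingredients one assembles the statement. On $\int_0^1$, substituting $u=kt$ produces an overall factor $k^{-s}$ and extends the $u$-range to $(0,\infty)$ up to an exponentially small error; plugging in the rescaled expansion gives $\zeta_{p,k}(s)=k^{-s}\sum_r k^{n-r/2}F_{r,p}(s)+\cdots$ with $F_{r,p}$ the (analytically continued) Mellin transform of $\mathcal{I}_{r,p}$ against the relevant kernel. Since $\tfrac{d}{ds}\big|_{s=0}k^{-s}=-\log k$, differentiating $\theta_k$ at $s=0$ yields a double series in $k^{\,n-r/2}$ and in $\log k$, and the parity vanishing collapses $r/2$ to integer powers, giving $-2\log\tau_{(\omega,h^k)}=\sum_{j\ge0}k^{n-j}(\alpha_j\log k+\beta_j(g_\omega))$. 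The $\alpha_j$ come from the finite-part-at-$s=0$ contribution $F_{r,p}(0)$, which is built only from the model operator $\mathcal{L}$ and is metric-independent; this one checks either by the explicit model computation or, structurally, by the variation argument that the derivative of $\log\tau$ along a path of metrics is given by the Bismut--Gillet--Soul\'e anomaly formula \cite{bismut1988analytic3}, whose $k$-expansion contains no $\log k$ term, hence $\partial_\omega\alpha_j=0$. Finally, the family version follows because every step --- the rescaled operator, the Duhamel expansion, the coefficients $\mathcal{I}_{r,p}$, and the spectral-gap estimate --- is local on $M$ (finite propagation speed localizes the heat kernel) and depends smoothly on auxiliary parameters, so the whole argument runs fiberwise with smooth dependence on the base and the base derivatives commute with both the $k\to\infty$ and $t\to0$ asymptotics; this is precisely \cite[Theorem 1.6]{ma2023superconnection} and the fiberwise analysis of Bismut--Vasserot.

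The main obstacle is the uniformity of the heat-kernel expansion across all three regimes at once --- small $t$, bounded $t$, and large $t$ --- with a remainder controlled in $k$, since the Mellin transform is sensitive to the behaviour at both endpoints of $(0,\infty)$: one needs the gap $\Box_p^{(k)}\gtrsim k$ with constants uniform in $k$, and one must track the interplay of the two sources of $k$ (the rescaling of $t$ and the rescaling of the spatial variable) carefully enough to see that only integer powers of $k$ and a single power of $\log k$ survive. Isolating the metric-independence of the $\alpha_j$ is the second delicate point, and is best handled by the anomaly-formula argument above rather than by a direct computation.
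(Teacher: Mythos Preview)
The paper does not give its own proof of this statement: Theorem~\ref{Finski} is quoted verbatim from \cite[Theorem 1.1, Remark 1.2]{finski2018full} and used as a black box, so there is no in-paper argument to compare against. Your proposal is therefore not a reconstruction of anything in the paper but rather a sketch of Finski's proof itself.

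As such a sketch, the outline is sound and tracks the Bismut--Vasserot strategy as sharpened by Finski: Mellin representation of $\theta_k'(0)$, rescaling $t\mapsto u/k$, the rescaled Getzler-type expansion of the diagonal heat kernel of $k^{-1}\Box_p^{(k)}$ around the model harmonic oscillator, parity to kill half-integer powers, and the spectral gap to control the large-$u$ tail. Your identification of the two delicate points---uniformity of the heat-kernel remainder across the three $u$-regimes, and the metric-independence of $\alpha_j$---is accurate. One comment: the argument you give for $\partial_\omega\alpha_j=0$ via the anomaly formula is clean but slightly circular in the context of this paper, since the paper \emph{uses} Finski's result together with the anomaly formula to derive \eqref{exp3}; Finski's own proof obtains the $\alpha_j$ directly from the residues of the Mellin transform of the model heat kernel, which are visibly metric-independent because the model operator $\mathcal{L}$ depends only on the constant curvature data at a point and the residues involve only its spectrum. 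Either route works, but the direct one is self-contained.
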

Here, the local coefficient means that it can be expressed as an integral of a density defined locally over $M$, given by $g_\omega$ and its derivatives. Combining these, we obtain
\begin{equation}\label{exp1}
2\log \frac{||\cdot||_{\omega,h^k}}{||\cdot||_{\omega_\varphi,h^ke^{-k\varphi}}}=\sum_{j=0}^\infty k^{n+1-j}\left(\int_0^1 dt\int_M \dot{\varphi_t}(\Delta_t a_{j-1}(g_{t})-a_{j}(g_t))\frac{\omega_t^n}{n!}+\beta_{j-1}(g_\varphi)-\beta_{j-1}(g)\right)
\end{equation}
as $k\rightarrow\infty$.

\textit{Method 2:} Alternatively, one can derive the asymptotic expansion (in fact, an equality for each $k$) from the \textit{Quillen anomaly formula} \cite[Theorem 1.23]{bismut1988analytic3}, obtaining
\begin{align}\label{exp2}
&2\log \frac{||\cdot||_{\omega,h^k}}{||\cdot||_{\omega_\varphi,h^ke^{-k\varphi}}}\nonumber\\
&=\sum_{j=0}^{n+1} k^{n+1-j}\int_M -iBC(\Td_j;g_{\varphi},g)\frac{\omega^{n+1-j}}{(n+1-j)!}+\Td_{j}(R_\varphi)\frac{-1}{(n+1-j)!}\sum^{n-j}_{s=0}\varphi \omega_{\varphi}^s\wedge \omega^{n-j-s}
\end{align}
where $BC$ denotes \textit{Bott--Chern forms}; see \cite{BC,bismut1988analytic1}.
By comparing coefficients of (\ref{exp1}) and (\ref{exp2}), we have for all $j\geq0$,
\begin{align}\label{exp3}
&\int_0^1 dt\int_M \dot{\varphi_t}(a_{j}(g_t)-\Delta_t a_{j-1}(g_t))\frac{\omega_t^n}{n!}+\beta_{j-1}(g)-\beta_{j-1}(g_\varphi)\nonumber\\
&=\int_M iBC(\Td_j;g_{\varphi},g)\frac{\omega^{n+1-j}}{(n+1-j)!}+\Td_{j}(R_\varphi)\frac{1}{(n+1-j)!}\sum^{n-j}_{s=0}\varphi \omega_{\varphi}^s\wedge \omega^{n-j-s}.
\end{align}

Our moment maps will be derived from taking the derivative of (\ref{exp3}) with respect to the K\"ahler potential $\varphi$. Since (\ref{exp3}) satisfies the cocycle identity, we only need to consider the variation at $\varphi=0$. Let $\varphi_t$ be a smooth path in $\mathcal{K}_\omega$ with $\varphi_0=0, \dot{\varphi}_0=\delta \varphi$. We denote by $\frac{\delta}{\delta\varphi}$ the derivative $\frac{d}{dt}|_{t=0}$. 

We begin with the right-hand side of (\ref{exp3}). The variation of the second term is straightforward:
\begin{align*}
\frac{\delta}{\delta\varphi} &\int_M\Td_{j}(R_\varphi)\frac{1}{(n+1-j)!}\sum^{n-j}_{s=0}\varphi \omega_{\varphi}^s\wedge \omega^{n-j-s}\\
&=\frac{1}{(n-j)!} \int_M \delta\varphi\Td_{j}(R)\omega^{n-j}=\frac{1}{(n-j)!} \int_M \delta\varphi \,\widetilde{t}_j\, \omega^{n}
\end{align*}
where $\widetilde{t}_j$ is the function defined by
\begin{equation*}
\widetilde{t}_j:=\frac{\Td_j(T_J M,g_{(\omega,J)})\omega^{n-j}}{\omega^n}.
\end{equation*}

For the first term, we express the Todd polynomial in terms of the Chern character polynomials as follows:
\begin{equation*}
\Td_j=\sum_{k=(k_1,\cdots,k_r)}a_j^{k_1,\cdots,k_r}\ch_{k_1}\cdots\ch_{k_r}.
\end{equation*}
Then by the property of Bott--Chern forms \cite[Proposition 1.2 (3)]{tian1999bott}, it suffices to consider the variation of the following form:
\begin{equation*}
\frac{\delta}{\delta\varphi} \int_M i\ch_{k_1}(R)\cdots BC(\ch_{k_m};g_{\varphi},g)\cdots\ch_{k_r}(R)\frac{\omega^{n+1-j}}{(n+1-j)!}.
\end{equation*}
This can again be computed using a property of Bott--Chern forms \cite[Proposition 1.2 (4)]{tian1999bott}:
\begin{align*}
&\int_M i\ch_{k_1}\cdots \frac{1}{(k_m-1)!}\Tr \left[ (iR)^{k_m-1}i\dot{g_\varphi}g^{-1} \right] \cdots\ch_{k_r}\frac{\omega^{n+1-j}}{(n+1-j)!}\\
&=-\frac{1}{(n-j)!}\int_M \langle \partial\bar{\partial}\delta\varphi, \widetilde{\ell}^k_m(M, \omega)^\flat\rangle_g \omega^n,
\end{align*}
where $\partial\bar{\partial}\delta\varphi$ denotes the tensor $\frac{\partial^2 \delta\varphi }{\partial z^p\partial \bar{z}^q}dz^p\otimes d\bar{z}^q$ and $\widetilde{\ell}^k_m(M, \omega)\in\Gamma(M,\End T_J M)$ denotes the $\End T_J M$-section defined as follows:
\begin{equation*}
\widetilde{\ell}^k_m(M, \omega):=\frac{1}{n+1-j}\frac{\ch_{k_1}(R)\cdots \frac{1}{(k_m-1)!}(iR)^{k_m-1}\cdots\ch_{k_r}(R)\omega^{n+1-j}}{\omega^n}.
\end{equation*}
By taking the formal adjoints of $\partial$ and $\bar{\partial}$ with respect to $g$, we obtain
\begin{align*}
&\frac{\delta}{\delta\varphi} \int_M iBC(\Td_j;g_{\varphi},g)\frac{\omega^{n+1-j}}{(n+1-j)!}\\
&=\sum_{k=(k_1,\cdots,k_r)}a_j^{k_1,\cdots,k_r} \frac{1}{(n-j)!}\int_M \delta\varphi (-\sum_m \partial^* \bar{\partial}^*\widetilde{\ell}^k_m(M, \omega)^\flat)\omega^n\\
&=:\frac{1}{(n-j)!}\int_M \delta\varphi \,\widetilde{\ell}_j\, \omega^n,
\end{align*}
where $\widetilde{\ell}_j$ is a smooth function. 

\begin{definition}
We define
\begin{equation*}
\widetilde{Z}_j(M,\omega) := \widetilde{t}_j+\widetilde{\ell}_j\in C^\infty(M).
\end{equation*}
Then we have
\begin{align}\label{quilvar}
&\frac{\delta}{\delta\varphi} \int_M iBC(\Td_j;g_{\varphi},g)\frac{\omega^{n+1-j}}{(n+1-j)!}+\Td_{j}(R_\varphi)\frac{1}{(n+1-j)!}\sum^{n-j}_{s=0}\varphi \omega_{\varphi}^s\wedge \omega^{n-j-s}\nonumber\\
&=\frac{1}{(n-j)!}\int_M \delta\varphi \,\widetilde{Z}_j(M,\omega)\, \omega^n
\end{align}
and
\begin{equation*}
\int_M \widetilde{Z}_j(M,\omega) \omega^n = \int_M \Td_j(M) [\omega]^{n-j}.
\end{equation*}
\end{definition}

Note that we intentionally used notation identical to that of \cite{dervan2023universal}, to make it clear that the equations $\widetilde{Z}_j(M,\omega)\equiv \mathrm{constant}$ are examples of \textit{Z--critical equations} in the sense of \cite[Definition 2.4]{dervan2023universal}. More precisely, the equation  $\widetilde{Z}_j(M,\omega)\equiv \mathrm{constant}$ is a $Z$--critical equation associated with the \textit{central charge}
\begin{equation*}
Z(M,[\omega])=i\int_M [\omega]^{n}  -\int_M \Td_j(M)[\omega]^{n-j}.
\end{equation*}
Basically, we have shown that that the critical point equation for the functional $i\widetilde{S}_\phi$ in \cite[Remark~2]{eum2025partition} is a $Z$--critical equation with central charge given by the invariant polynomial $\phi$.

We now compute the variation of the left-hand side of (\ref{exp3}). It is immediate that
\begin{equation}\label{TYZ}
\frac{\delta}{\delta\varphi} \int_0^1 dt\int_M \dot{\varphi_t}(a_{j}(g_t)-\Delta_t a_{j-1}(g_t))\frac{\omega_t^n}{n!}=\int_M \delta\varphi(a_{j}(g)-\Delta a_{j-1}(g))\frac{\omega^n}{n!}.
\end{equation}
The variation of $\beta_{j-1}(g_\varphi)$ is obtained by subtracting (\ref{TYZ}) from (\ref{quilvar}); this gives
\begin{equation}\label{torvar}
-\frac{\delta}{\delta\varphi} \beta_{j-1}(g_\varphi)=\int_M \delta\varphi\, b_{j-1}(g) \,\frac{\omega^n}{n!}
\end{equation}
for some function $b_{j-1}$.
That is:
\begin{definition}
Define $b_{j-1}$ for each $j\geq 1$ by
\begin{equation*}
\frac{n!}{(n-j)!}\widetilde{Z}_j(M,\omega)=a_j(g)-\Delta a_{j-1}(g)+b_{j-1}(g).
\end{equation*}
\end{definition}
By the explicit formulas of $a_j$ \cite{lu2000lower} and $\beta_j$ \cite{vasserot1989asymptotics,finski2018full} for small values of $j$, we find that
\begin{equation}
\begin{aligned}
\widetilde{Z}_0(M,\omega) &= 1;\\
n\widetilde{Z}_1(M,\omega) &= \frac12 S;\\
n(n-1)\widetilde{Z}_2(M,\omega) &= -\frac{1}{6} \Delta S + \frac{1}{24}(|R|^2-4|\ric|^2+3S^2).\label{expformula}\\
\end{aligned}
\end{equation}
We will show in Appendix \ref{A} how one can verify (\ref{expformula}) directly from the definition of $\widetilde{Z}_j$. We will give a moment map interpretation of $\widetilde{Z}_j(M,\omega)$ in Section \ref{momentproof}. We also note that the equation $\widetilde{t}_j\equiv \mathrm{constant}$ was considered in \cite{leung1998bando}.

\subsection{Proof of the moment map equations}\label{momentproof}

We now fix $(M,\omega)$ and a prequantization $P=(L,h,\nabla)$, and consider the universal family $\pi : \mathcal{X}\rightarrow \mathcal{J}_{int}$ introduced in Section \ref{spaceof}. Let $\lambda^{(k)}$ be the determinant line bundle on $\mathcal{J}_{int}$ with $\xi=\widetilde{L}^k$. Then $\mathcal{G}$ acts on $\lambda^{(k)}$ by pullback of sections, as in (\ref{pullback}).

We first verify the conditions mentioned at the end of Section \ref{equivline}.
\begin{proposition}
The determinant line bundle $\lambda^{(k)}\rightarrow \mathcal{J}_{int}$ is $\mathcal{G}$-equivariant and $\mathcal{G}$ acts on $\lambda^{(k)}$ by biholomorphisms preserving the Quillen metric $||\cdot||_{\omega,\widetilde{h}^k}$ constructed from the data $g^M=g_\omega, h^\xi=\widetilde{h}^k$.
\end{proposition}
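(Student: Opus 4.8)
The plan is to check the three ingredients required by the construction at the end of Section \ref{equivline}, namely that $\mathcal{G}$ acts on $\lambda^{(k)}$ equivariantly, by biholomorphisms, and preserving the Quillen metric. The key observation is that every element $g\in\mathcal{G}=\mathrm{Aut}(P)$ lifts canonically to an automorphism of the whole geometric data over $\mathcal{X}=M\times\mathcal{J}_{int}$: it acts on the base $\mathcal{J}_{int}$ by the Hamiltonian diffeomorphism $\psi_g$ it induces on $M$ (via \eqref{actofham}), it acts on the total space $\mathcal{X}$ by $(x,J)\mapsto(\psi_g(x),\psi_g.J)$, and it acts on $\widetilde{L}$ by the given bundle map $g^L$ on each fiber. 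First I would verify that this lift is an automorphism of the \emph{K\"ahler fibration} $(\pi,g^M,T^H\mathcal{X})$ together with the Hermitian holomorphic line bundle $(\widetilde{L},\widetilde{h})$: since $\psi_g$ is symplectic and $\psi_g.J$ is defined precisely so that $\psi_g:M_J\to M_{\psi_g.J}$ is biholomorphic, the relative metrics $g_{(\omega,J)}$ are intertwined; and since $g\in\mathcal{G}$ preserves $h$ and $\nabla$, and the holomorphic structure $(L_J,h_J)$ is uniquely determined by $J$ and $\nabla$, the map $g^L$ intertwines $(L_J,h_J)$ with $(L_{\psi_g.J},h_{\psi_g.J})$. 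Hence $g$ acts on the Dolbeault complexes $(E^p_J,\bar\partial_J)$ by unitary isomorphisms $E^p_J\to E^p_{\psi_g.J}$ commuting with $D_J$, $D_{\psi_g.J}$.

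\textbf{Equivariance of the determinant line bundle.} From the previous step, $g$ maps the finite-dimensional eigenspaces $K^{b,p}_J$ isometrically onto $K^{b,p}_{\psi_g.J}$, hence induces an isomorphism $\lambda^b|_J\to\lambda^b|_{\psi_g.J}$ of the local models, and these patch (being independent of $b$ on overlaps) into a lift of the $\mathcal{G}$-action to $\lambda^{(k)}\to\mathcal{J}_{int}$. This is exactly the action \eqref{pullback} by pullback of sections, so $\lambda^{(k)}$ is $\mathcal{G}$-equivariant.

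\textbf{Biholomorphism and preservation of the Quillen metric.} Next I would check that this action is holomorphic. The holomorphic structure on $\lambda^{(k)}$ was \emph{defined} (Section \ref{bgs}, following \cite[Theorem 1.3]{bismut1988analytic3}) as the one for which the connection $^{0}\nabla^b$ is the Chern connection of $(\lambda^b,|\;|^b)$; since $g$ acts unitarily on $(K^b,\text{Herm.\ product})$ and commutes with the horizontal lift (because it is an automorphism of the fibration preserving $T^H\mathcal{X}$), it preserves both $|\;|^b$ and $^{0}\nabla^b$, hence the induced holomorphic structure, so $\mathcal{G}$ acts by biholomorphisms. Finally, the Quillen metric is $\|\;\| = |\;|^b\cdot\tau(\bar\partial^{(b,\infty)})$, and the Ray--Singer analytic torsion is a spectral invariant of $D_J^2$ on $(E^p_J,\langle\,,\,\rangle)$; since $g$ conjugates $D_J^2$ to $D_{\psi_g.J}^2$ by a unitary isomorphism, $\tau(\bar\partial_J^{(b,\infty)})=\tau(\bar\partial_{\psi_g.J}^{(b,\infty)})$, and combined with the isometry of $|\;|^b$ we get that the $\mathcal{G}$-action preserves $\|\;\|_{\omega,\widetilde{h}^k}$.

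\textbf{Main obstacle.} The conceptual content is routine once the right framework is in place; the only genuine point requiring care is the compatibility of the horizontal distribution $T^H\mathcal{X}$ with the $\mathcal{G}$-action and, relatedly, that the metric data $(g^M,\widetilde{h})$ used to build the Quillen metric is literally the pullback of itself under $g$ — i.e.\ that there is no anomalous scaling. This is where one uses that $\mathcal{G}$ preserves $\omega$ on the nose (not just its class) and preserves $h$ and $\nabla$ exactly, so the associated $(1,1)$-form on $\mathcal{X}$ in the sense of the K\"ahler fibration is $\mathcal{G}$-invariant; the horizontal subbundle $T^H\mathcal{X}$ coming from the product structure $\mathcal{X}=M\times\mathcal{J}_{int}$ is manifestly preserved by the product-type action. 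I would spell this out carefully and leave the spectral-invariance and unitarity verifications as the short computations they are.
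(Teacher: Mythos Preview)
Your proposal is correct and follows essentially the same route as the paper: equivariance via the intertwining of $D_J^2$ and $D_{\psi.J}^2$ on eigenspaces, holomorphicity via invariance of $^0\nabla^b$ and $|\;|^b$, and preservation of the Quillen metric via spectral invariance of the Ray--Singer torsion. The only minor difference is that where you argue abstractly that the product-type action preserves $T^H\mathcal{X}$ and hence the horizontal lift, the paper instead computes explicitly that $\widetilde{\nabla}_A e(J)=\delta_A e(J)$ is the plain directional derivative in $J$ and checks equivariance of this formula directly --- an identity it reuses later in the moment map computation.
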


\begin{proof}
Let $\psi\in\mathcal{G}$. By the definition of $\mathcal{G}$, the action of $\psi$ on $\Gamma(M,L^k)$ intertwines $D^2_J$ and $D^2_{\psi.J}$ and maps the eigenspace of $D^2_J$ onto the eigenspace of $D^2_{\psi.J}$ with the same eigenvalue. Hence, the line bundle $\lambda^{(k)} \rightarrow \mathcal{J}_{int}$ is $\mathcal{G}$-equivariant. 

To show that $\mathcal{G}$ acts on $\lambda^{(k)}$ by biholomorphisms, we prove that $\mathcal{G}$ preserves $\widetilde{\nabla}$ on $E^p$ and $P^b:E\rightarrow K^b$ on $U^b$. By the definition of $\widetilde{L}^k\rightarrow\mathcal{X}$, for a smooth section $e$ of $E^p$ and $A\in T\mathcal{J}_{int}$,
\begin{equation*}
\widetilde{\nabla}_A e(J) = \nabla_{A^H} e(z;J)=\nabla_{0\oplus A}e(z;J)=\left.\frac{d}{dt}\right|_{t=0} e(z;J+tA)=:\delta_A e(J),
\end{equation*}
where we used the notation $z\in M, J\in \mathcal{J}_{int}$ to indicate the dependence of sections on $M,\mathcal{J}_{int}$. This implies
\begin{equation*}
\widetilde{\nabla}_A (\psi.e)=\delta_A \psi.(e(\psi^{-1}.z;\psi^{-1}.J))=\psi. \delta_{\psi^{-1}_*A}e=\psi. \widetilde{\nabla}_{\psi^{-1}_*A}e.
\end{equation*}
Thus $\mathcal{G}$ preserves $\widetilde{\nabla}$. We have already shown that $\psi$ maps $K^b_J$ onto $K^b_{\psi.J}$. Again, by the definition of $\mathcal{G}$, $\psi$ preserves the $L^2$ Hermitian product on $E$. Hence, $\psi$ preserves the orthogonal projection operators $P^b$ over $U^b$. 

This proves that ${}^0\nabla^b$ is $\mathcal{G}$-invariant over $U^b$. Since $\mathcal{G}$ acts on $\mathcal{J}_{int}$ by biholomorphisms, this implies that $\mathcal{G}$ acts on $\lambda^{(k)}$ by biholomorphisms. Finally, the spectra of $D^2_J$ and $D^2_{\psi.J}$ are identified by the action of $\psi$, and hence the Ray--Singer analytic torsion is $\mathcal{G}$-invariant. This proves the $\mathcal{G}$-invariance of the Quillen metric.
\end{proof}

Define $\Omega^{(k)}=i({}^1\nabla^{\lambda^{(k)}})^2$ to be $i$ times the curvature form of the Quillen metric on $\lambda^{(k)}$. Using (\ref{moment}), one readily derives the moment map for $\mathcal{G}\curvearrowright (\mathcal{J}_{int},\Omega^{(k)})$.
\begin{definition}
Let
\begin{equation*}
\widetilde{\mu}^{(k)}(f)=\mathcal{L}^{\lambda^{(k)}}(f)-{}^1\nabla^{\lambda^{(k)}}_{\widehat{f}}\in \Gamma(\mathcal{J}_{int},\End{\lambda^{(k)}})\simeq C^\infty(\mathcal{J}_{int},\mathbb{C})
\end{equation*}
for $f\in C^\infty(M)\simeq \mathrm{Lie}(\mathcal{G})$. Then $\langle \mu^{(k)}, f\rangle=i\widetilde{\mu}^{(k)}(f)$ defines a moment map $\mu^{(k)}:\mathcal{J}_{int}\rightarrow C^\infty(M)$ for $\mathcal{G}\curvearrowright (\mathcal{J}_{int},\Omega^{(k)})$, via the identification $C^\infty(M)\simeq \mathrm{Lie}(\mathcal{G})^*$ induced by the $L^2$ product $\langle\cdot,\cdot\rangle$ on $M$.
\end{definition}

Now we compute the asymptotic expansion of $\widetilde{\mu}^{(k)}(f)$ as $k\rightarrow\infty$. Let $\varepsilon>0$. By the spectral gap property of $D^2_{L^k,J}$ \cite[Theorem 1.5.5]{ma2007holomorphic}, with the constants depending smoothly on $J$, for each $J\in \mathcal{J}_{int}$, there is an open neighborhood $J\in\mathcal{U}\subset\mathcal{J}_{int}$ of $J$ such that $\mathrm{Spec}(D_{L^k,J'}^2)\cap [0,\varepsilon]=\{0\}$ and $K(\tilde{L}^k)^\varepsilon_{J'}=H^0(M_{J'},L^k_{J'}), \forall J'\in\mathcal{U}$ for uniform $k\gg 1$. In particular, $\mathcal{U}\subset U^\varepsilon$. We will work on $\mathcal{U}$ and assume $k\gg1$ from now on.

By definition, 
\begin{equation*}
\widetilde{\mu}^{(k)}(f)=\mathcal{L}^{\lambda^{(k)}}(f)-{}^1\nabla^{\lambda^{(k)}}_{\widehat{f}}=\mathcal{L}^{\lambda^{(k)}}(f)-{}^0\nabla^{\lambda^{(k)}}_{\widehat{f}}-\iota_{\widehat{f}}\partial^{\mathcal{J}_{int}}\log \tau_{(k)}^2.
\end{equation*}
First, we compute the $\mathcal{L}^{\lambda^{(k)}}(f)-{}^0\nabla^{\lambda^{(k)}}_{\widehat{f}}$ part. We proceed similarly to \cite{foth2007manifold}. Let $e_1,\cdots,e_d$ be a local orthonormal frame for $K^\varepsilon:=K(\tilde{L}^k)^{\varepsilon}$ on $\mathcal{U}$. That is, $(e_j(J))_j$ is an orthonormal basis of $H^0(M_J,L_J^k)$ for each $J\in\mathcal{U}$. Identify $(e_j)_j$ with their duals via the Hermitian product. Then
\begin{equation*}
\mathcal{L}^{\lambda^{(k)}}(f)-{}^0\nabla^{\lambda^{(k)}}_{\widehat{f}}(e_1\wedge\cdots\wedge e_d)=\sum_{j=1}^d e_1\wedge\cdots\wedge\left( \mathcal{L}^{K^\varepsilon}(f)-\nabla^{K^\varepsilon}_{\widehat{f}}\right)^*e_j\wedge\cdots\wedge e_d
\end{equation*}
and
\begin{align*}
&\left(\mathcal{L}^{K^\varepsilon}(f)-\nabla^{K^\varepsilon}_{\widehat{f}}\right)e_j(J)\\
&=\left(\left.\frac{d}{dt}\right|_{t=0} \exp{(tf)}.e_j-P^\varepsilon \delta_{\widehat{f}} e_j \right) (J)\\
&=\left.\frac{d}{dt}\right|_{t=0} \exp{(tf)}.(e_j(\exp{(-tf)}.z;\exp{(-tf)}.J))-P^\varepsilon e_j(z;\exp{(-tf)}.J)\\
&=\left.\frac{d}{dt}\right|_{t=0} \exp{(tf)}.(e_j(\exp{(-tf)}.z;J))+(1-P^\varepsilon)e_j(z;\exp{(-tf)}.J)\\
&=\mathcal{L}^{L^k}(f)e_j(J)+(1-P^\varepsilon)\delta_{\widehat{f}}e_j(J)\\
&=(\nabla^{L^k}_{X_f}-ikf)e_j(J)+(1-P^\varepsilon)\delta_{\widehat{f}}e_j(J).
\end{align*}
Let $P^{(k)}_f:=(\nabla^{L^k}_{-X_f}+ikf)$ be the \textit{Kostant-Souriau operator} or \textit{prequantization operator} acting on $\Gamma(M,L^k)$. Since $X_f$ is Hamiltonian, $P^{(k)}_f$ is skew--Hermitian.
Then
\begin{align*}
&\mathcal{L}^{\lambda^{(k)}}(f)-{}^0\nabla^{\lambda^{(k)}}_{\widehat{f}}(e_1\wedge\cdots\wedge e_d)\\
&=\sum_{j=1}^d e_1\wedge\cdots\wedge\left( -P^{(k)}_f +(1-P^\varepsilon )\delta_{\widehat{f}}\right)^* e_j\wedge\cdots\wedge e_d\\
&=\sum_{j=1}^d \langle (-P^{(k)}_f +(1-P^\varepsilon )\delta_{\widehat{f}})^*e_j,e_j \rangle e_1\wedge\cdots\wedge e_d\\
&=\Tr[P^\varepsilon P^{(k)}_f](e_1\wedge\cdots\wedge e_d).
\end{align*}
Tuynman's lemma \cite{tuynman1987quantization} identifies the operator $Q^{(k)}_f:=P^\varepsilon P^{(k)}_f$ with the \textit{Toeplitz operator} with symbol $i(kf-\Delta f)$.
\begin{lemma}[Tuynman's lemma, \protect{\cite[Proposition 4.1]{bordemann1991gl}}]
For $e,e'\in H^0(M,L^k)$,
\begin{equation*}
\langle Q^{(k)}_f e,e' \rangle=\langle i(kf-\Delta f)e,e' \rangle,
\end{equation*}
where $\langle\,,\,\rangle$ denotes the $L^2$ product on $\Gamma(M,L^k)$.
\end{lemma}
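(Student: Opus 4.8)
The plan is to prove Tuynman's lemma by a direct integration-by-parts computation on $H^0(M,L^k)$. The statement asserts that the operator $Q_f^{(k)} = P^\varepsilon P_f^{(k)} = P^\varepsilon\bigl(\nabla^{L^k}_{-X_f} + ikf\bigr)$, when tested against holomorphic sections, agrees with multiplication by the function $i(kf - \Delta f)$. Since $P^\varepsilon$ is the orthogonal projection onto $H^0(M,L^k) = K(\widetilde{L}^k)^\varepsilon$, it suffices to show that for $e, e' \in H^0(M,L^k)$ one has $\langle (\nabla^{L^k}_{-X_f} + ikf)e, e'\rangle = \langle i(kf - \Delta f)e, e'\rangle$, i.e.\ $\langle \nabla^{L^k}_{-X_f}e, e'\rangle = -i\langle (\Delta f) e, e'\rangle$.

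First I would decompose $X_f$ into its $(1,0)$ and $(0,1)$ parts. Writing $X_f = X_f^{1,0} + X_f^{0,1}$, the key point is that $\nabla^{L^k}_{X_f^{0,1}}e = \bar\partial_{X_f^{0,1}}e = 0$ because $e$ is holomorphic and $\nabla^{L^k}$ is the Chern connection; so only $\nabla^{L^k}_{X_f^{1,0}}e$ survives. Next I would use the Hamiltonian relation $df = \iota_{X_f}\omega$ (with the sign convention of this paper, \eqref{hamilton}) to express $X_f^{1,0}$ in terms of $\bar\partial f$ via the metric $g_\omega$: in local holomorphic coordinates $X_f^{1,0} = i g^{p\bar q}\partial_{\bar q}f\,\partial_p$ (up to the sign fixed by the convention). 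Then $\nabla^{L^k}_{X_f^{1,0}}e = i g^{p\bar q}(\partial_{\bar q}f)\nabla^{L^k}_p e$, and I pair this against $e'$ and integrate by parts, moving the $\nabla_p$ onto $e'$ and onto the volume form/metric coefficients. The $(0,1)$-derivatives hitting $\overline{e'}$ vanish since $e'$ is holomorphic, so the only surviving term is the one where the derivative lands on $g^{p\bar q}(\partial_{\bar q}f)$ together with the volume density; collecting these reproduces exactly $-i\,\Delta f$ acting as multiplication (using that $\Delta f = g^{p\bar q}\partial_p\partial_{\bar q}f$ up to the normalization of the Laplacian, plus the term from differentiating $\det g$). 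Finally, combining with the $ikf$ term from $P_f^{(k)}$ gives the symbol $i(kf - \Delta f)$, and applying $P^\varepsilon$ on the left is harmless since both sides already lie in $H^0$ when tested against $e'$.

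The main obstacle is bookkeeping: getting every sign and normalization consistent with this paper's conventions — in particular the minus sign in front of \eqref{hamilton} relative to \cite{scarpa2020hitchin}, the normalization of $\Delta$ (whether it is $\partial^*\bar\partial^* $-type or $g^{p\bar q}\partial_p\partial_{\bar q}$), and the fact that $(\nabla^{L^k})^2 = -i\omega$ rather than $+i\omega$. I would pin these down first by evaluating both sides on a model case (e.g.\ $M = \mathbb{P}^1$ with the Fubini–Study form, or a flat torus) before committing to the general computation. Since the lemma is quoted verbatim from \cite{bordemann1991gl,tuynman1987quantization}, in the write-up I would either cite it directly or include the short integration-by-parts argument above with the conventions of Section~\ref{spaceof} made explicit; the computation itself is routine once the signs are fixed.
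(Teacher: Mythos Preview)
Your approach is correct and is essentially the standard proof of Tuynman's lemma: use that $e'$ is holomorphic to drop $P^\varepsilon$, use that $e$ is holomorphic to kill $\nabla^{L^k}_{X_f^{0,1}}e$, and then integrate $\nabla^{L^k}_{X_f^{1,0}}e$ by parts against $e'$, where the holomorphicity of $e'$ kills the term where the derivative lands on $\overline{e'}$ and only the divergence of $X_f^{1,0}$ survives, yielding $-i\Delta f$ (in the paper's sign conventions). The paper itself gives no proof of this lemma; it is simply quoted from \cite[Proposition 4.1]{bordemann1991gl} (and \cite{tuynman1987quantization}), so your write-up would supply what the paper omits. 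Your cautionary remark about fixing the signs via the conventions $df=\iota_{X_f}\omega$ and $(\nabla^{L^k})^2=-i\omega$ before writing the final formula is well placed; once those are pinned down the computation is indeed routine.
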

Hence we have
\begin{align}\label{l2exp}
\Tr[P^\varepsilon P^{(k)}_f]&=\int_M i(kf-\Delta f)\rho_k \frac{\omega^n}{n!}\nonumber\\
&= \int_M i f(k\rho_k-\Delta\rho_k)\frac{\omega^n}{n!}\nonumber\\
&=\sum_{j=0}^\infty k^{n+1-j}\int_M if(a_j-\Delta a_{j-1} )\frac{\omega^n}{n!}
\end{align}
as $k\rightarrow \infty$.
Next, we compute the $-\iota_{\widehat{f}}\partial^{\mathcal{J}_{int}}\log \tau_{(k)}^2$ part. First,
\begin{equation*}
-\iota_{\widehat{f}}\partial\log \tau_{(k)}^2=-\widehat{f}^{1,0}\cdot\log \tau_{(k)}^2= \frac{i}{2}\mathbb{J}\widehat{f}\cdot\log \tau_{(k)}^{2}
\end{equation*}
since $\tau_{(k)}$ is $\mathcal{G}$-invariant. Recall from (\ref{infaction}) that $\widehat{f}_J=\mathcal{L}_{-X_f}J$. Thus, $\mathbb{J}\widehat{f}_J=J\mathcal{L}_{-X_f}J=\mathcal{L}_{-JX_f}J$ by integrability of $J\in\mathcal{J}_{int}$. Therefore,
\begin{equation}\label{integrability}
\mathbb{J}\widehat{f}\cdot\log \tau_{(k)}^2(J)=\left.\frac{d}{dt}\right|_{t=0} \log\tau_{(k)}^2(J_t),
\end{equation}
where $J_t:=\left(\Phi^{t}_{-JX_f}\right)^*J$ and $\Phi^{t}_{-JX_f}$ is the flow of the vector field $-JX_f$. Since the derivatives 
over the base commute with the asymptotics (\ref{fintor}), we take this derivative for each $\beta_{j}$ (we will see that coefficients $\alpha_j$ appearing in front of $k^{n-j}\log k$ play no role in the moment map picture; they yield constant functions over $\mathcal{J}_{int}$). Let $\widetilde{\beta}_j(g_{(\omega,J)})$ denote the local integral density of $\beta_j(g_{(\omega,J)})$. Using $g_{(\omega,J_t)}=\left(\Phi^{t}_{-JX_f}\right)^*g_{((\Phi^{t}_{JX_f})^*\omega,J)}$, we compute
\begin{align*}
&\left.\frac{d}{dt}\right|_{t=0}\beta_j(g_{(\omega,J_t)})\\
&=\left.\frac{d}{dt}\right|_{t=0} \int_M \widetilde{\beta}_j(g_{(\omega,J_t)})\\
&=\left.\frac{d}{dt}\right|_{t=0} \int_M  \left(\Phi^{t}_{-JX_f}\right)^*\widetilde{\beta}_j(g_{((\Phi^{t}_{JX_f})^*\omega,J)})\\
&=\left.\frac{d}{dt}\right|_{t=0} \int_M  \widetilde{\beta}_j(g_{((\Phi^{t}_{JX_f})^*\omega,J)}).
\end{align*}
By (\ref{hamilton}) and Cartan's formula,
\begin{equation*}
\left.\frac{d}{dt}\right|_{t=0}(\Phi^{t}_{JX_f})^*\omega=\mathcal{L}_{JX_f}\omega=2i\partial\bar{\partial}f.
\end{equation*}
Hence
\begin{equation}\label{RS}
-\frac{i}{2}\mathbb{J}\widehat{f}\cdot\beta_j(g_{(\omega,J)})=-\frac{i}{2}\frac{\delta}{\delta \varphi}\beta_j(g_{(\omega_\varphi,J)})=\int_M if\,b_j \,\frac{\omega^n}{n!}
\end{equation}
by (\ref{torvar}), where we set $\delta \varphi=2f$ in the notation of Section \ref{Zcrit}.
Combining (\ref{l2exp}) and (\ref{RS}), we obtain the asymptotic expansion of $\widetilde{\mu}^{(k)}(f)$ as $k\rightarrow\infty$, where only the powers $k^{n+1-j}$ are recorded:
\begin{equation*}
\widetilde{\mu}^{(k)}(f)=\sum_{j=0}^\infty k^{n+1-j}\int_M if(a_j-\Delta a_{j-1}+b_{j-1} )\frac{\omega^n}{n!}.
\end{equation*}
The associated moment map $\mu^{(k)}:\mathcal{J}_{int}\rightarrow C^\infty(M)$ then has the asymptotic expansion, again recording only the powers $k^{n+1-j}$:
\begin{equation*}
\mu^{(k)}(J)=\sum_{j=0}^\infty k^{n+1-j}\left[-a_j\left(g_{(\omega,J)}\right)+\Delta a_{j-1}\left(g_{(\omega,J)}\right)-b_{j-1}\left(g_{(\omega,J)}\right) \right].
\end{equation*}

By the BGS curvature formula (\ref{Qcurv}), we have
\begin{align}\label{qcurvasym}
\Omega^{(k)}&=-\left[\int_{\mathcal{X}/\mathcal{J}_{int}}\Td(TM,g_{(\omega,J)})\ch(\widetilde{L}^k,\widetilde{h}^k)\right]^{(2)}\nonumber\\
&=\sum_{j=0}^{n+1}k^{n+1-j}\left[-\int_{\mathcal{X}/\mathcal{J}_{int}}\Td_j(TM,g_{(\omega,J)})\ch_{n+1-j}(\widetilde{L},\widetilde{h})\right]\nonumber\\
&=:\sum_{j=0}^{n+1}k^{n+1-j} \left[-\Omega_j\right]
\end{align}
for $2$-forms $\Omega_j$ on $\mathcal{J}_{int}$. Finally, by taking the asymptotic expansion of the moment map equation $d\langle \mu^{(k)}(\cdot),f\rangle=\iota_{\widehat{f}}\Omega^{(k)}$ and using the commutativity of differentiation over the base with asymptotic expansion, we deduce our main result, Theorem \ref{thm:mainA}, the statement of which we recall below.
\begin{theorem}\label{main}
The map $\mu_j:\mathcal{J}_{int}\rightarrow C^\infty(M)$ defined by 
\begin{equation*}
\mu_j(J):=a_j\left(g_{(\omega,J)}\right)-\Delta a_{j-1}\left(g_{(\omega,J)}\right)+b_{j-1}\left(g_{(\omega,J)}\right)=\frac{n!}{(n-j)!}\widetilde{Z}_j(M_J,\omega)
\end{equation*}
is a moment map for $\mathcal{G}\curvearrowright (\mathcal{J}_{int},\Omega_j)$, where $\Omega_j\in \mathcal{A}^2(\mathcal{J}_{int})$ is given by the fiber integral
\begin{equation*}
\Omega_j=\int_{\mathcal{X}/\mathcal{J}_{int}}\Td_j(TM,g_{(\omega,J)})\ch_{n+1-j}(\widetilde{L},\widetilde{h}).
\end{equation*}
\end{theorem}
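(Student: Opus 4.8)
The plan is to deduce the two conditions in the definition of a moment map for $\mu_j$ by matching asymptotic coefficients in $k$ on both sides of the corresponding identities already known for $\mu^{(k)}$. All the ingredients are in place. By the Proposition above, $\lambda^{(k)}\to\mathcal{J}_{int}$ is $\mathcal{G}$-equivariant, $\mathcal{G}$ preserves the Quillen metric $||\,||$ and hence its Chern connection $^1\nabla$, and $(^1\nabla)^2=-i\Omega^{(k)}$; so the construction of Section \ref{equivline} (following \cite[Example 7.9]{BerlineGetzlerVergne2004}) gives, for each $k$ for which the construction applies, a moment map $\mu^{(k)}:\mathcal{J}_{int}\to C^\infty(M)$ for $\mathcal{G}\curvearrowright(\mathcal{J}_{int},\Omega^{(k)})$ --- meaning that for every $f\in C^\infty(M)\simeq\mathrm{Lie}(\mathcal{G})$ one has the \emph{exact} identity $d\langle\mu^{(k)}(\cdot),f\rangle=\iota_{\widehat{f}}\Omega^{(k)}$ on $\mathcal{J}_{int}$, together with equivariance $\langle\mu^{(k)}_{g.J},f\rangle=\langle\mu^{(k)}_{J},\mathrm{Ad}_{g^{-1}}f\rangle$ for $g\in\mathcal{G}$. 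I would then pass to the $k\to\infty$ asymptotics in both identities.

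First I would record the two expansions. On the curvature side, the Quillen curvature formula (\ref{Qcurv}) together with $\ch_m(\widetilde{L}^k,\widetilde{h}^k)=k^m\ch_m(\widetilde{L},\widetilde{h})$ yields, as in (\ref{qcurvasym}), the \emph{polynomial} identity $\Omega^{(k)}=-\sum_{j=0}^{n+1}k^{n+1-j}\Omega_j$ with $\Omega_j=\int_{\mathcal{X}/\mathcal{J}_{int}}\Td_j(TM,g_{(\omega,J)})\ch_{n+1-j}(\widetilde{L},\widetilde{h})$; comparing coefficients of $k^{n+1-j}$ in $d\Omega^{(k)}=0$, in $\psi^*\Omega^{(k)}=\Omega^{(k)}$ ($\psi\in\mathcal{G}$), and in the reality of $\Omega^{(k)}=i(^1\nabla)^2$ shows that each $\Omega_j\in\mathcal{A}^2(\mathcal{J}_{int})$ is closed, real and $\mathcal{G}$-invariant, so that the assertion ``moment map for $\mathcal{G}\curvearrowright(\mathcal{J}_{int},\Omega_j)$'' is meaningful (non-degeneracy of $\Omega_j$ is not required, as recalled in Section \ref{equivline}). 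On the moment-map side, the computation of this section --- Tuynman's lemma and the TYZ expansion for the $\mathcal{L}^{\lambda^{(k)}}(f)-{}^0\nabla^{\lambda^{(k)}}_{\widehat f}$ term, and Finski's expansion (Theorem \ref{Finski}) with (\ref{torvar}) for the analytic-torsion term --- gives $\mu^{(k)}(J)=-\sum_{j\ge0}k^{n+1-j}\mu_j(J)$ with $\mu_j(J)=a_j(g_{(\omega,J)})-\Delta a_{j-1}(g_{(\omega,J)})+b_{j-1}(g_{(\omega,J)})=\tfrac{n!}{(n-j)!}\widetilde{Z}_j(M_J,\omega)$, the last equality being the one obtained after (\ref{torvar}).

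The core step is the coefficient matching. For the moment map equation I would fix $f$, work near an arbitrary point of $\mathcal{J}_{int}$ where the expansion of $\mu^{(k)}$ holds for $k\gg1$, and use that --- by the quoted results of \cite{ma2023superconnection} for the $a_j$ and \cite{finski2018full} for the $\beta_j$ (hence the $b_j$) --- the expansion coefficients are smooth over $\mathcal{J}_{int}$ and base derivatives commute with the asymptotics; this makes the asymptotic expansion of the $1$-form $d\langle\mu^{(k)}(\cdot),f\rangle$ equal to $-\sum_{j\ge0}k^{n+1-j}\,d\langle\mu_j(\cdot),f\rangle$. Since this $1$-form is, exactly, $\iota_{\widehat f}\Omega^{(k)}=-\sum_{j=0}^{n+1}k^{n+1-j}\iota_{\widehat f}\Omega_j$, uniqueness of asymptotic expansions forces $d\langle\mu_j(\cdot),f\rangle=\iota_{\widehat f}\Omega_j$ for all $j$ (and $d\langle\mu_j(\cdot),f\rangle=0$ for $j>n+1$, which is not needed) --- this is condition (2). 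For equivariance I would fix $g,J,f$, expand both sides of $\langle\mu^{(k)}_{g.J},f\rangle=\langle\mu^{(k)}_{J},\mathrm{Ad}_{g^{-1}}f\rangle$ as scalar functions of $k$, and match coefficients of $k^{n+1-j}$ to get $\langle\mu_{j,g.J},f\rangle=\langle\mu_{j,J},\mathrm{Ad}_{g^{-1}}f\rangle$ --- condition (1). (Alternatively, equivariance is immediate from $\mu_j=\tfrac{n!}{(n-j)!}\widetilde{Z}_j$ being a universal curvature invariant of $g_{(\omega,J)}$ and each $\psi\in\mathcal{G}$ covering a Hamiltonian diffeomorphism of $(M,\omega)$.)

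The delicate point --- the one I expect to be the main obstacle --- is the legitimacy of this term-by-term comparison: a priori $\mu^{(k)}$ carries an infinite asymptotic series whereas $\Omega^{(k)}$ is an honest degree-$(n+1)$ polynomial in $k$, so the matching must be phrased through uniqueness of asymptotic expansions, and, crucially, one must know that applying $d$ along $\mathcal{J}_{int}$ --- equivalently, differentiating in the K\"ahler potential $\varphi$ on the base --- commutes with the $k\to\infty$ asymptotics. That interchange is exactly what the smoothness-and-commutation statements cited from \cite{ma2023superconnection} and \cite{finski2018full} supply; granting them, the remainder is bookkeeping, the analytic input (Tuynman's lemma, the Bergman-kernel TYZ expansion, Finski's torsion expansion, the Quillen curvature formula) having already been assembled above.
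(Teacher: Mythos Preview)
Your proposal is correct and follows essentially the same approach as the paper: assemble the exact moment map equation $d\langle\mu^{(k)}(\cdot),f\rangle=\iota_{\widehat f}\Omega^{(k)}$ from the equivariant-line-bundle construction, expand both sides asymptotically in $k$ (the curvature side as a polynomial via the Quillen curvature formula, the moment-map side via Tuynman's lemma, the TYZ expansion, and Finski's torsion expansion), and match coefficients using the commutativity of base derivatives with the asymptotics. Your explicit treatment of the equivariance condition and of the closedness and $\mathcal{G}$-invariance of each $\Omega_j$ is a useful elaboration that the paper leaves implicit, but the underlying argument is the same.
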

Using the explicit formula for $\widetilde{Z}_j$ in \eqref{expformula} when $j=1$, together with Fujiki's fiber integral formula \cite[Theorem 4.4]{fujiki1992moduli}, it follows that Theorem \ref{main} generalizes the Donaldson--Fujiki moment map picture for scalar curvature in the integrable case. In Section \ref{FU}, we explain how to derive a generalization of Fujiki's fiber integral formula for $j\geq 1$. 

Note that since (\ref{qcurvasym}) contains no terms of order $k^{m}\log k$, the terms in the asymptotic expansion of $\widetilde{\mu}^{(k)}(f)$ arising from the coefficients $\alpha_j$ in (\ref{fintor}) yield constant functions on $\mathcal{J}_{int}$.

\begin{remark}
Writing down explicit expressions for the forms $\Omega_j$ beyond those provided by Theorem \ref{thm:mainB} is difficult, except in the first few cases. Consequently, their positivity properties are unclear. In the setting of the Kempf--Ness theorem, the positivity of the closed $2$-forms is closely related to the convexity of the associated Kempf--Ness functionals; for $\Omega_2$, the second variation of the corresponding functional is given in \cite[Proposition 1]{eum2025partition}. We also note that, in the context of deformation quantization (see Remark \ref{deforquant}), it is natural for the subleading $2$-forms to be neither positive nor nondegenerate.
\end{remark}

\subsection{Comparison with the result of Foth--Uribe}\label{FU}

In \cite{foth2007manifold}, Foth and Uribe used the determinant line bundle $(\lambda^{(k)})^{-1}_J=\det H^0(M_J,L_J^k)$ for large enough $k$ on each open set $\mathcal{U}$ to derive the moment map equation for the trace of the operator $Q^{(k)}_f$ with respect to the action of $\mathcal{G}$. In particular, they used (the dual of) the invariant connection ${}^0\nabla^{\lambda^{(k)}}$ over $\mathcal{U}$ (called the $L^2$ connection in their paper) and characterized the curvature of ${}^0\nabla^{\lambda^{(k)}}$ in terms of the trace of the Toeplitz operator. The asymptotic expansion then follows. 
\begin{theorem}[\protect{\cite[Theorem 1.2, Theorem 2.1]{foth2007manifold}}]
Let $A,B\in T_J\mathcal{J}_{int}$. Then as $k\rightarrow\infty$,
\begin{equation*}
{}^0\Omega^{(k)}(A,B):=i\left( {}^0\nabla^{\lambda^{(k)}}\right)^2(A,B)=\sum_{j=0}^{\infty} k^{n-j} \,\frac{1}{8}\int_M \Tr_{TM}(JAB)a_j(g_{(\omega,J)})\frac{\omega^n}{n!}.
\end{equation*}
See \cite[Corollary A.7]{foth2007manifold} for the missing factor of $i$. Our normalization of $\omega$ differs by a factor of $2$ from that used there.
\end{theorem}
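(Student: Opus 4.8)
The plan is to reproduce the Foth--Uribe computation directly, and then to cross-check it against the results of this paper. Fix $k\gg 1$ and an open set $\mathcal{U}\subset\mathcal{J}_{int}$ as in Section~\ref{momentproof}, so that $\lambda^{(k)}|_{\mathcal{U}}=(\det K^{\varepsilon})^{-1}$ with $K^{\varepsilon}_J=H^0(M_J,L^k_J)$ and ${}^0\nabla^{\lambda^{(k)}}$ the Chern connection of the induced $L^2$-metric. The structural point I would exploit is that the $L^2$ inner product on $\Gamma(M,L^k)$ is built only from $\widetilde{h}^k=h^k$ and the volume form $\omega^n/n!$, both independent of $J$; only the holomorphic structure $\bar\partial_J$ varies. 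Hence $\{H^0(M_J,L^k_J)\}_{J\in\mathcal{U}}$ is a holomorphic family of $d$-dimensional subspaces of the \emph{fixed} Hilbert space $\mathcal{H}=L^2(M,L^k)$, with ${}^0\nabla^{K^{\varepsilon}}=P^{\varepsilon}\circ\delta$ ($\delta$ the flat ambient derivative), and by the Gauss equation for a holomorphic Hermitian subbundle of a flat bundle the curvature of $\det K^{\varepsilon}$ is $-\Tr(\beta^{\dagger}\wedge\beta)$, where $\beta\in\mathcal{A}^{1,0}(\mathcal{U},\mathrm{Hom}(K^{\varepsilon},(K^{\varepsilon})^{\perp}))$, $\beta_A:=(1-P^{\varepsilon})\circ\delta_A$, is the second fundamental form. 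So ${}^0\Omega^{(k)}=i\,\Tr(\beta^{\dagger}\wedge\beta)$, and the whole problem reduces to computing $\Tr_{H^0}(\beta^{\dagger}_B\beta_A)$ for $A,B\in T_J\mathcal{J}_{int}$.

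Next I would identify $\beta$ with Kodaira--Spencer data. For a local holomorphic frame $J\mapsto s(J)\in H^0(M_J,L^k_J)$ one has $\bar\partial_J s(J)\equiv 0$; differentiating along $A$ and using $\bar\partial_{J+tA}=\bar\partial_J+t\,\iota_{\mu_A}\nabla^{1,0}+O(t^2)$, with $\mu_A\in\mathcal{A}^{0,1}(M_J,T^{1,0}M)$ the Kodaira--Spencer representative of $A$ from Section~\ref{spaceof}, gives $\bar\partial_J(\delta_A s)=-\iota_{\mu_A}\nabla^{1,0}s$. Using that $A$ tangent to $\mathcal{J}_{int}$ makes $\mu_A$ $\bar\partial_J$-closed, that $A$ tangent to $\mathcal{J}$ makes the lowered tensor $g_{(\omega,J)}(\mu_A)$ symmetric, and the commutator $\nabla^{1,0}\bar\partial+\bar\partial\nabla^{1,0}=-ik\,\omega$, one checks that $\eta_A:=\iota_{\mu_A}\nabla^{1,0}s$ is itself $\bar\partial_J$-closed; as $H^1(M_J,L^k_J)=0$ for $k\gg 1$ it is exact, and solving $\bar\partial_J u=-\eta_A$ with $u\perp H^0$ gives $\beta_A s=u=-\bar\partial^{*}_J\,\square_1^{-1}\eta_A$ (here $\square_1$ is the $\bar\partial$-Laplacian on $L^k$-valued $(0,1)$-forms, invertible for $k\gg 1$). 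Consequently, for an orthonormal basis $(e_i)$ of $H^0(M_J,L^k_J)$,
\begin{equation*}
\Tr_{H^0}(\beta^{\dagger}_B\beta_A)=\sum_i\bigl\langle\,\square_1^{-1}(\iota_{\mu_A}\nabla^{1,0}e_i),\ \iota_{\mu_B}\nabla^{1,0}e_i\,\bigr\rangle_{L^2},
\end{equation*}
which exhibits $\beta^{\dagger}_B\beta_A$ as an operator of Toeplitz type on $H^0(M_J,L^k_J)$ --- the form in which Foth--Uribe phrase the curvature.

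The last and most delicate step is the large-$k$ asymptotics of this trace. I would use the spectral gap $\square_1\geq c_0 k$ on $L^k$-valued $(0,1)$-forms \cite[Theorem~1.5.5]{ma2007holomorphic} together with the off-diagonal decay of the Bergman kernel: after rescaling near a point and comparing with the Bargmann--Fock model, $\square_1^{-1}$ acts to leading order as $k^{-1}$ times a universal constant, so $\beta^{\dagger}_B\beta_A$ is, modulo a relative $O(k^{-1})$ error, the Toeplitz operator whose principal symbol is the pointwise Hermitian pairing of $\mu_A$ and $\mu_B$. A linear-algebra identity valid for $A,B\in T_J\mathcal{J}_{int}$ (which are $g_{(\omega,J)}$-symmetric and anticommute with $J$) rewrites that pairing as $\tfrac18\Tr_{TM}(JAB)$ and fixes the constant; since $\Tr(T_f)=\int_M f\,\rho_k\,\frac{\omega^n}{n!}$ for a Toeplitz operator with symbol $f$ and the whole construction is local and universal in $g_{(\omega,J)}$, one obtains ${}^0\Omega^{(k)}(A,B)=\tfrac18\int_M\Tr_{TM}(JAB)\,\rho_k(g_{(\omega,J)})\,\frac{\omega^n}{n!}$ up to terms vanishing to all orders in $k^{-1}$; inserting the TYZ expansion $\rho_k=\sum_j k^{n-j}a_j(g_{(\omega,J)})$, and using that base-differentiation commutes with the asymptotics as in \cite{ma2023superconnection}, yields the stated series.

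I expect this last step to be the main obstacle: extracting the principal symbol $\tfrac18\Tr_{TM}(JAB)$ precisely, ruling out spurious lower-order contributions, and tracking every factor of $2$ and of $i$ (the source of the normalization caveats recorded in the statement) requires a careful rescaled/model analysis of $\square_1^{-1}$ paired against the Bergman projection. As an independent check --- and, should Theorem~B be available by other means, an alternative proof --- differentiating $||\,||=|\,|\cdot\tau_{(k)}$ gives ${}^0\Omega^{(k)}=\Omega^{(k)}+i\partial^{\mathcal{J}_{int}}\bar\partial^{\mathcal{J}_{int}}\log\tau_{(k)}^{2}$; combining the Quillen curvature formula \eqref{Qcurv}, the vanishing $\Omega_0=\int_{\mathcal{X}/\mathcal{J}_{int}}\ch_{n+1}(\widetilde{L},\widetilde{h})=0$ (since $\cc_1(\widetilde{L},\widetilde{h})$ is pulled back from $M$ and $\omega^{n+1}=0$), the Finski expansion \eqref{fintor}, and Theorem~B recovers exactly the same series, confirming that the two theorems are equivalent.
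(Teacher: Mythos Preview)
The paper does not give its own proof of this statement: it is quoted verbatim from Foth--Uribe \cite[Theorem~1.2, Theorem~2.1]{foth2007manifold} and then used as an input to derive Theorem~B (the generalized Fujiki fiber integral formula). So there is no in-paper argument to compare against; what matters is whether your outline would stand on its own.

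Your direct route --- Gauss equation for the holomorphic subbundle $K^{\varepsilon}\subset L^2(M,L^k)$, identification of the second fundamental form with the Kodaira--Spencer operator $s\mapsto\iota_{\mu_A}\nabla^{1,0}s$, and then Toeplitz/Bergman asymptotics of the resulting trace --- is exactly the Foth--Uribe strategy the paper summarizes in words (``characterized the curvature of ${}^0\nabla^{\lambda^{(k)}}$ in terms of the trace of the Toeplitz operator; the asymptotic expansion then follows''). The structural steps are set up correctly; as you already flag, the genuine analytic content lives entirely in the last step (extracting the principal symbol $\tfrac18\Tr_{TM}(JAB)$ from $\square_1^{-1}$ paired against the Bergman projection, with all constants), and your sketch there is not yet a proof but an accurate roadmap for one.

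One warning about your ``independent check'': in this paper the logic runs the other way. Theorem~B is obtained \emph{from} the Foth--Uribe theorem by writing $\Omega_j={}^0\Omega_{j-1}-i\partial\bar\partial\beta_{j-1}$ and inserting the cited asymptotic. So invoking Theorem~B to confirm the Foth--Uribe expansion is circular unless you have an independent proof of Theorem~B, which the paper does not supply. Your parenthetical ``should Theorem~B be available by other means'' is therefore essential, not optional.
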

A similar but more general characterization was also obtained in \cite{ma2023superconnection}. Unlike $\Omega^{(k)}$, ${}^0\Omega^{(k)}$ is defined only locally for large enough $k$. Denote the $k^{n-j}$-coefficient of ${}^0\Omega^{(k)}$ by $-{}^0\Omega_j$, that is, 
\begin{equation*}
-{}^0\Omega_j(A,B):=\frac{1}{8}\int_M \Tr_{TM}(JAB)a_j(g_{(\omega,J)})\frac{\omega^n}{n!}
\end{equation*}
for $A,B\in T_J\mathcal{J}_{int}$. Then, by comparing coefficients, one obtains the moment map property of $J \mapsto a_j(g_{(\omega,J)})-\Delta a_{j-1}(g_{(\omega,J)})$ for $\mathcal{G}\curvearrowright (\mathcal{J}_{int},{}^0\Omega_{j-1})$. Thus, \textit{locally} on $\mathcal{U}$, we are in the situation described in (\ref{compare}), and our result is compatible with the results of Foth and Uribe. Since $\Omega_j={}^0\Omega_{j-1}-i\partial\bar{\partial}\beta_{j-1}$, we obtain a generalization of Fujiki's fiber integral formula, Theorem \ref{thm:mainB}.

\appendix 
\section{Explicit computation of the Z--critical equation}\label{A}

In this appendix, we compute the $Z$--critical equation corresponding to the central charge $\int_M \ch_2(M) [\omega]^{n-2}$. 
First, by standard identities in K\"ahler geometry \cite[Lemma 4.7]{szekelyhidi2014introduction},
\begin{equation*}
\widetilde{t}=\frac{\ch_2(TM,g)\omega^{n-2}}{\omega^n}=\frac{1}{2n(n-1)}\left( |\ric|^2-|R|^2 \right).
\end{equation*}
Second, by the same identities,
\begin{equation*}
\widetilde{\ell}=\frac{1}{n-1}\frac{\Tr[iR]\omega^{n-1}}{\omega^n}=\frac{1}{n(n-1)}R_{p}{}^{q}{}_{j\bar{k}}g^{\bar{k}j}=\frac{1}{n(n-1)}{\ric}_{p}{}^{q}=:\frac{1}{n(n-1)}\widetilde{\ric}\in \Gamma(M,\End T^{1,0}M).
\end{equation*}
Now we have to apply formal adjoints of $\partial$ and $\bar{\partial}$ to $\widetilde{\ric}$. That is, we need to evaluate
\begin{equation*}
\int_M \langle \partial\bar{\partial}f, \widetilde{\ric}^\flat\rangle_g \omega^n=\int_M f \,\partial^* \bar{\partial}^* \widetilde{\ric}^\flat \,\omega^n
\end{equation*}
for an arbitrary function $f$, where $\partial\bar{\partial}f$ denotes the tensor $\frac{\partial^2 f }{\partial z^p\partial \bar{z}^q}dz^p\otimes d\bar{z}^q$. This integration by parts can be done more easily in the language of differential forms. Note that
\begin{equation*}
\langle \partial\bar{\partial}f, \widetilde{\ric}^\flat\rangle_g=\partial_j\bar{\partial}_kf {\ric}_{p}{}^{j}g^{\bar{k}p}=\partial_j\bar{\partial}_kf {\ric}_{p\bar{q}}g^{\bar{q}j}g^{\bar{k}p}=\langle i\partial\bar{\partial}f, \ric \rangle_\omega
\end{equation*}
where the last term denotes the inner product of the Hermitian $(1,1)$-forms $i\partial\bar{\partial}f$ and the Ricci form $\ric=i\ric_{p\bar{q}}dz^p \wedge d\bar{z}^q$
in \cite[Lemma 4.7]{szekelyhidi2014introduction}. Again, by the same identities,
\begin{equation*}
\int_M \langle i\partial\bar{\partial}f, \ric \rangle_\omega \omega^n=\int_M \Delta f S \omega^n -n(n-1)i\partial\bar{\partial}f\wedge\ric\wedge\omega^{n-2}=\int_M  f \Delta S \omega^n,
\end{equation*}
which implies
\begin{equation*}
-\partial^* \bar{\partial}^* \widetilde{\ell}^\flat=\frac{-1}{n(n-1)}\Delta S.
\end{equation*}
Combining these, we obtain 
\begin{equation*}
\widetilde{Z}(M,\omega)=\frac{1}{n(n-1)}(-\Delta S -\frac{1}{2}|R|^2+ \frac{1}{2}|\ric|^2).
\end{equation*}

It follows from \cite[Example 2.6]{dervan2023universal} that the formula for $\cc_1^2$ is
\begin{equation*}
\widetilde{Z}(M,\omega)=\frac{1}{n(n-1)}(-2\Delta S +S^2 -|\ric|^2).
\end{equation*}
We can also compute this directly using the same trick as above. Since $\Td_2=-\frac{1}{12}\ch_2+\frac{1}{8}\cc_1^2$, this recovers (\ref{expformula}).

\bibliographystyle{alpha}
\bibliography{References}

@book{ma2007holomorphic,
  title={Holomorphic Morse inequalities and Bergman kernels},
  author={Ma, X. and Marinescu, G.},
  volume={254},
  year={2007},
  publisher={Springer Science \& Business Media}
}

@article{BC,
  title={Hermitian vector bundles and the equidistribution of the zeroes of their holomorphic sections},
  author={Bott, R. and Chern, S.-S.},
  journal={Acta Math.},
  volume={114},
  pages={71--112},
  year={1965}
}

@article{lu2000lower,
  title={On the lower order terms of the asymptotic expansion of {T}ian-{Y}au-{Z}elditch},
  author={Lu, Z.},
  journal={Amer. J. Math.},
  volume={122},
  number={2},
  pages={235--273},
  year={2000},
  publisher={Johns Hopkins University Press}
}

@article{tian1999bott,
  title={Bott-{C}hern forms and geometric stability},
  author={Tian, G.},
  journal={Discrete Contin. Dyn. Syst.},
  volume={6},
  number={1},
  pages={211--220},
  year={2000},
  publisher={Discrete and Continuous Dynamical Systems}
}

@article{futaki2004asymptotic,
  title={Asymptotic {C}how semi-stability and integral invariants},
  author={Futaki, A.},
  journal={Int. J. Math.},
  volume={15},
  number={09},
  pages={967--979},
  year={2004},
  publisher={World Scientific}
}

@book{szekelyhidi2014introduction,
  title={An Introduction to Extremal Kahler Metrics},
  author={Sz{\'e}kelyhidi, G.},
  volume={152},
  year={2014},
  publisher={American Mathematical Society}
}

@article{ray1973analytic,
  title={Analytic torsion for complex manifolds},
  author={Ray, D. B. and Singer, I. M.},
  journal={Ann.of Math.},
  volume={98},
  number={1},
  pages={154--177},
  year={1973},
  publisher={JSTOR}
}

@article{quillen1985determinants,
  title={Determinants of {C}auchy-{R}iemann operators over a {R}iemann surface},
  author={Quillen, D.},
  journal={Funct. Anal. Appl.},
  volume={19},
  number={1},
  pages={31--34},
  year={1985},
  publisher={Springer}
}

@article{bismut1988analytic1,
  title={Analytic torsion and holomorphic determinant bundles: {I. Bott-Chern} forms and analytic torsion},
  author={Bismut, J.-M. and Gillet, H. and Soul{\'e}, C.},
  journal={Commun. Math. Phys.},
  volume={115},
  pages={49--78},
  year={1988},
  publisher={Springer}
}

@article{bismut1988analytic2,
  title={Analytic torsion and holomorphic determinant bundles: {II. D}irect images and {B}ott-{C}hern forms},
  author={Bismut, J.-M. and Gillet, H. and Soul{\'e}, C.},
  journal={Commun. Math. Phys.},
  volume={115},
  number={1},
  pages={79--126},
  year={1988},
  publisher={Springer}
}

@article{bismut1988analytic3,
  title={Analytic torsion and holomorphic determinant bundles: {III. Q}uillen metrics on holomorphic determinants},
  author={Bismut, J.-M. and Gillet, H. and Soul{\'e}, C.},
  journal={Commun. Math. Phys.},
  volume={115},
  number={2},
  pages={301--351},
  year={1988},
  publisher={Springer}
}

@article{vasserot1989asymptotics,
  title={The asymptotics of the {R}ay-{S}inger analytic torsion associated with high powers of a positive line bundle},
  author={Bismut, J.-M. and Vasserot, {\'E}.},
  journal={Commun. Math. Phys.}, 
  volume={125},
  number={2},
  pages={355--367},
  year={1989},
  publisher={Springer}
}

@article{finski2018full,
  title={On the full asymptotics of analytic torsion},
  author={Finski, S.},
  journal={J. Funct. Anal.},
  volume={275},
  number={12},
  pages={3457--3503},
  year={2018},
  publisher={Elsevier}
}

@article{ma2023superconnection,
  title={Superconnection and family {B}ergman kernels},
  author={Ma, X. and Zhang, W.},
  journal={Math. Ann.},
  volume={386},
  number={3},
  pages={2207--2253},
  year={2023},
  publisher={Springer}
}

@book{BerlineGetzlerVergne2004,
  title     = {Heat Kernels and {D}irac Operators},
  author    = {Berline, N. and Getzler, E. and Vergne, M.},
  series    = {Grundlehren Text Editions},
  year      = {2004},
  publisher = {Springer Berlin Heidelberg},
}

@article{foth2007manifold,
  title={The manifold of compatible almost complex structures and geometric quantization},
  author={Foth, T. and Uribe, A.},
  journal={Commun. Math. Phys.},
  volume={274},
  number={2},
  pages={357--379},
  year={2007},
  publisher={Springer}
}

@inproceedings{dervan2023stability,
  title={Stability conditions for polarised varieties},
  author={Dervan, R.},
  booktitle={Forum Math. Sigma},
  volume={11},
  pages={e104},
  year={2023},
  organization={Cambridge University Press}
}

@article{dervan2023universal,
     author = {Dervan, R. and Hallam, M.},
     title = {The universal structure of moment maps in complex geometry},
     journal = {Journal de l{\textquoteright}\'Ecole polytechnique {\textemdash} Math\'ematiques},
     pages = {399--436},
     year = {2026},
     publisher = {\'Ecole polytechnique},
     volume = {13},
     doi = {10.5802/jep.330},
     language = {en},
     url = {https://jep.centre-mersenne.org/articles/10.5802/jep.330/}
}

@article{dervan2025stability,
  title={Stability conditions in geometric invariant theory},
  author={Dervan, R. and Ib{\'a}{\~n}ez N{\'u}{\~n}ez, A.},
  journal={Quart. J. Math.},
  volume={76},
  number={1},
  pages={287--311},
  year={2025},
  publisher={Oxford University Press UK}
}

@article{fujiki1992moduli,
  title={Moduli space of polarized algebraic manifolds and {K\"a}hler metrics},
  author={Fujiki, A.},
  journal={Sugaku expositions},
  volume={5},
  number={2},
  pages={173--191},
  year={1992}
}

@article{fujiki1990moduli,
  title={The moduli space of extremal compact {K\"a}hler manifolds and generalized {W}eil-{P}etersson metrics},
  author={Fujiki, A. and Schumacher, G.},
  journal={Publ. Res. Inst. Math. Sci.},
  volume={26},
  number={1},
  pages={101--183},
  year={1990},
  publisher={Research Institute for Mathematical Sciences}
}

@article{donaldson2005scalar,
  title={Scalar curvature and projective embeddings, {II}},
  author={Donaldson, S. K.},
  journal={Quart. J. Math.},
  volume={56},
  number={3},
  pages={345--356},
  year={2005},
  publisher={Oxford University Press}
}

@incollection{donaldson1997remarks,
  title={Remarks on gauge theory, complex geometry and 4-manifold topology},
  author={Donaldson, S. K.},
  booktitle={Fields Medallists' Lectures},
  pages={384--403},
  year={1997},
  publisher={World Scientific}
}

@inproceedings{donaldson2001planck,
  title={Planck’s constant in complex and almost-complex geometry},
  author={Donaldson, S. K.},
  booktitle={XIIIth International Congress on Mathematical Physics (London, 2000)},
  pages={63--72},
  year={2001},
  organization={Int. Press Boston, MA}
}

@article{koiso1990complex,
  title={On the complex structure of a manifold of sections},
  author={Koiso, N.},
  journal={Osaka J. Math.},
  volume={27},
  pages={175--183},
  year={1990}
}

@article{scarpa2020hitchin,
  title={The {H}itchin-csc{K} system ({PhD} thesis)},
  author={Scarpa, C.},
  journal={arXiv preprint arXiv:2010.07728},
  year={2020}
}

@article{eum2025partition,
title = {Partition functions of determinantal point processes on polarized {K}ähler manifolds},
journal = {J. Geom. Phys.},
volume = {221},
pages = {105744},
year = {2026},
author = {Eum, K.},
publisher={Elsevier}
}

@article{bordemann1991gl,
  title={gl($\infty$) and geometric quantization},
  author={Bordemann, M. and Hoppe, J. and Schaller, P. and Schlichenmaier, M.},
  journal={Commun. Math. Phys.},
  volume={138},
  number={2},
  pages={209--244},
  year={1991},
  publisher={Springer}
}

@article{tuynman1987quantization,
  title={Quantization: towards a comparison between methods},
  author={Tuynman, G. M.},
  journal={J. Math. Phys.},
  volume={28},
  number={12},
  pages={2829--2840},
  year={1987},
  publisher={American Institute of Physics}
}

@article{la2021formal,
  title={The formal moment map geometry of the space of symplectic connections},
  author={La Fuente-Gravy, L.},
  journal={arXiv preprint arXiv:2106.13608},
  year={2021}
}

@article{la2022scalar,
  title={The scalar curvature in formal deformation quantization. {I}},
  author={La Fuente-Gravy, L.},
  journal={arXiv preprint arXiv:2211.04559},
  year={2022}
}

@article{futaki2021quantum,
  title={Quantum moment map and obstructions to the existence of closed {F}edosov star products},
  author={Futaki, A. and La Fuente-Gravy, L.},
  journal={J. Geom. Phys.},
  volume={163},
  pages={104118},
  year={2021},
  publisher={Elsevier}
}

@incollection{wernerwendland,
 author = {M{\"u}ller, W. and Wendland, K.},
 title = {Extremal {K{\"a}hler} metrics and {Ray}-{Singer} torsion},
 booktitle = {Geometric Aspects of Partial Differential Equations (Roskilde, 1998)},
 volume={242 {of Contemp. Math.}},
 pages = {135--160},
 year = {1999},
 publisher = {Amer. Math. Soc., Providence, RI},
}

@article{takhtajan2006quantum,
  title={Quantum {L}iouville theory in the background field formalism {I}. {C}ompact {R}iemann surfaces},
  author={Takhtajan, L. A. and Teo, L.-P.},
  journal={Commun. Math. Phys.},
  volume={268},
  number={1},
  pages={135--197},
  year={2006},
  publisher={Springer}
}

@article{xu1998fedosov,
  title={Fedosov $\ast$-products and quantum momentum maps},
  author={Xu, P.},
  journal={Commun. Math. Phys.},
  volume={197},
  number={1},
  pages={167--197},
  year={1998},
  publisher={Springer}
}

@article{muller2004some,
  title={Some remarks on g-invariant {F}edosov star products and quantum momentum mappings},
  author={M{\"u}ller-Bahns, M. F. and Neumaier, N.},
  journal={J. Geom. Phys.},
  volume={50},
  number={1-4},
  pages={257--272},
  year={2004},
  publisher={Elsevier}
}

@article{muller2004invariant,
  title={Invariant star products of {W}ick type: classification and quantum momentum mappings},
  author={M{\"u}ller-Bahns, M. F. and Neumaier, N.},
  journal={Lett. Math. Phys.},
  volume={70},
  number={1},
  pages={1--15},
  year={2004},
  publisher={Springer}
}

@article{gutt2003natural,
  title={Natural star products on symplectic manifolds and quantum moment maps},
  author={Gutt, S. and Rawnsley, J.},
  journal={Lett. Math. Phys.},
  volume={66},
  number={1},
  pages={123--139},
  year={2003},
  publisher={Springer}
}

@article{leung1998bando,
  title={Bando--{F}utaki invariants and {K}{\"a}hler--{E}instein metric},
  author={Leung, N. C.},
  journal={Commun. Anal. Geom.},
  volume={6},
  number={4},
  pages={799--808},
  year={1998},
  publisher={International Press of Boston}
}

\end{document}